\newtheorem{theorem}[subsection]{Theorem}
\newtheorem{lemma}[subsection]{Lemma}
\newtheorem{sublemma}[subsubsection]{Lemma}
\newtheorem{corollary}[subsection]{Corollary}
\newtheorem{definition}[subsection]{Definition}
\newtheorem{remark}[subsection]{Remark}
\newcommand\testshape{family=\f@family; series=\f@series; shape=\f@shape.}
\def\myemphInternal#1{\if n\f@shape%
\begingroup\itshape #1\endgroup\/%
\else\begingroup\it\sffamily #1\endgroup%
\fi}
\def\myemph{\futurelet\testchar\MaybeOptArgmyemph}
\def\MaybeOptArgmyemph{\ifx[\testchar \let\next\OptArgmyemph
                 \else \let\next\NoOptArgmyemph \fi \next}
\def\OptArgmyemph[#1]#2{\index{#1}\myemphInternal{#2}}
\def\NoOptArgmyemph#1{\myemphInternal{#1}}
\newcommand\id{\mathrm{id}}          
\newcommand\eps{\varepsilon}
\newcommand\bR{\mathbb{R}}
\newcommand\bN{\mathbb{N}}
\newcommand\Diff{\mathcal{D}}       
\newcommand\Stab{\mathcal{S}}       
\newcommand\DiffId{\Diff_{\id}}     
\newcommand\StabId{\Stab_{\id}}     
\newcommand\Cinfty{\mathcal{C}^{\infty}}
\newcommand\Ci[2]{\mathcal{C}^{\infty}(#1,#2)}               
\newcommand\Stabilizer[1]{\Stab(#1)}
\newcommand\StabilizerId[1]{\StabId(#1)}
\newcommand\SO{\mathrm{SO}}
\newcommand\Aman{A}
\newcommand\Bman{B}
\newcommand\Cman{C}
\newcommand\Eman{E}
\newcommand\Gman{G}
\newcommand\Kman{K}
\newcommand\Mman{M}
\newcommand\Nman{N}
\newcommand\Pman{P}
\newcommand\Tman{T}
\newcommand\Uman{U}
\newcommand\Vman{V}
\newcommand\Wman{W}
\newcommand\Xman{X}
\newcommand\Yman{Y}
\newcommand\Zman{Z}
\newcommand\cov[1]{\widetilde{#1}} %
\newcommand\VV{\mathcal{V}}
\newcommand\WW{\mathcal{W}}
\newcommand\DiffM{\Diff(\Mman)}
\newcommand\DiffMX{\Diff(\Mman,\Xman)}
\newcommand\func{f}
\newcommand\dif{h}
\newcommand\gdif{g}
\newcommand\fSing{\Sigma_{\func}}
\newcommand\msect{\xi}
\newcommand\rsect{\sigma}
\newcommand\Cf[2]{\mathcal{E}(#1; #2)}
\newcommand\Df[2]{\Diff(#1; #2)}
\newcommand\CfU[3]{\mathcal{E}(#2,#2\setminus #1; #3)}
\newcommand\DfU[3]{\Diff(#2,#2\setminus #1; #3)}
\newcommand\conn{\nabla}
\newcommand\VE{V\Eman}
\newcommand\lift[2]{\widetilde{#1}_{#2}}
\newcommand\aHmt{F}
\newcommand\bHmt{\mathbf{\aHmt}}
\newcommand\aSet{\Yman}
\newcommand\bSet{\Zman}
\newcommand\VBundle[1]{V#1}
\newcommand\HBundle[1]{\ker(#1)}
\newcommand\VU{\VBundle{\aSet}}
\newcommand\HU{\HBundle{\conn}}
\newcommand\ball[2]{\Bman_{#2}(#1)}
\newcommand\hUman{\cov{\Uman}}
\newcommand\hVman{\cov{\Vman}}
\newcommand\Fol{\mathcal{F}}
\newcommand\Eclass{\mathcal{Z}}
\newcommand\CCman{\Cman}
\begin{document}


\title{Diffeomorphisms preserving Morse-Bott functions}
\author{Oleksandra Khokhliuk}
\email{khokhliyk@gmail.com}
\address{Department of Geometry, Topology and Dynamical Systems, \\
Taras Shevchenko National University of Kyiv, \\
Hlushkova Avenue, 4e, Kyiv, 03127 Ukraine}

\author{Sergiy Maksymenko}
\email{maks@imath.kiev.ua}
\address{Topology Laboratory, Institute of Mathematics of NAS of Ukraine, \\ Tereshchenkivska str., 3, Kyiv, 01004 Ukraine}

\keywords{diffeomorphism, singular foliation, Ehresmann connection, Morse-Bott function}

\subjclass[2010]{%
57R30, 
57R45, 
53B15, 
}

\begin{abstract}
Let $f:M\to\mathbb{R}$ be a Morse-Bott function on a closed manifold $M$, so the set $\Sigma_f$ of its critical points is a closed submanifold whose connected components may have distinct dimensions.
Denote by $\mathcal{S}(f) = \{h \in \mathcal{D}(M) \mid f\circ h=h \}$ the group of diffeomorphisms of $M$ preserving $f$ and let $\mathcal{D}(\Sigma_f)$ be the group of diffeomorphisms of $\Sigma_f$.
We prove that the ``restriction to $\Sigma_f$'' map $\rho:\mathcal{S}(f) \to \mathcal{D}(\Sigma_f)$, $\rho(h) = h|_{\Sigma_f}$, is a locally trivial fibration over its image $\rho(\mathcal{S}(f))$.
\end{abstract}

\maketitle

\section{Introduction}
Homotopy properties of groups of leaf-preserving diffeomorphisms for nonsingular foliations are studied in many papers, see e.g.
\cite{Rybicki:MonM:1995},
\cite{Rybicki:SJM:1996},
\cite{Rybicki:DM:1996},
\cite{Rybicki:DGA:1999},
\cite{Rybicki:DGA:2001},
\cite{HallerTeichmann:AGAG:2003},
\cite{AbeFukui:CEJM:2005},
\cite{LechRybicki:BCP:2007} and references therein.
Most of the results concern the extension of results by M.~Herman~\cite{Herman:CR:1971}, W.~Thurston~\cite{Thurston:BAMS:1974}, J.~Mather~\cite{Mather:Top:1971}, \cite{Mather:BAMS:1974},  D.~B.~A.~Epstein~\cite{Epstein:CompMath:1970} on proving perfectness of such groups.

However for singular foliations their groups of diffeomorphisms are less studied, e.g.~\cite{Fukui:JMKU:1980}, \cite{Rybicki:DM:1998}, \cite{Maksymenko:OsakaJM:2011}, \cite{LechMichalik:PMD:2013}.

The present paper is devoted to certain deformational properties of groups of leaf-preserving diffeomorphisms of codimension one foliations with Morse-Bott singularities.
These foliations, in particular, foliations by level sets of Morse-Bott functions, play an important role in Hamiltonian dynamics and Poisson geometry, see e.g.~\cite{Fomenko:UMN:1989}, \cite{ScarduaSeade:JDG:2009}, \cite{MafraScarduraSeade:JS:2014}, \cite{MartinezAlfaroMezaSarmientoOliveira:JDE:2016}, \cite{MartinezAlfaroMezaSarmientoOliveira:TMNA:2018}.

Let $\Mman$ be a smooth compact manifold and $\Fol$ be a Morse-Bott foliation on $\Mman$ such that every connected components of $\partial\Mman$ is a leaf of $\Fol$, see Definition~\ref{def:MB_fol}.
Thus the \myemph{singular} set $\Sigma$ of $\Fol$ is a disjoint union of finitely many closed submanifolds of $\Mman$.
A diffeomorphism $\dif:\Mman\to\Mman$ will be called \myemph{leaf preserving} for $\Fol$ whenever $\dif(\omega)=\omega$ for each leaf $\omega\in \Fol$.
Let $\Diff(\Mman)$ and $\Diff(\Sigma)$ be the groups of all smooth diffeomorphisms of $\Mman$ and $\Sigma$ respectively, and $\Diff(\Fol)$ be the subgroup of $\Diff(\Mman)$ consisting of leaf-preserving diffeomorphisms of $\Fol$.

The well-known result by J.~Cerf~\cite{Cerf:BSMF:1961} and R.~Palais~\cite{Palais:CMH:1960} implies that the ``restriction to $\Sigma$'' map $\rho:\Diff(\Mman) \to \Diff(\Sigma)$, $\rho(\dif) = \dif|_{\Sigma}$, is a locally trivial fibration over its image $\rho(\Diff(\Mman))$.
Since $\rho$ is a homomorphism, the latter statement is equivalent to \myemph{existence of local sections of $\rho$}, i.e.\! if $\alpha = \dif|_{\Sigma} \in \Diff(\Sigma)$ for some $\dif\in\Diff(\Mman)$, then there exists an open neighborhood $\VV \subset\Diff(\Sigma)$ of $\alpha$ and a continuous map $\msect:\VV\to\Diff(\Mman)$ such that $\msect(\beta)|_{\Sigma} = \beta$ for all $\beta\in\VV$.

However, the authors did not find in the available literature that \myemph{$\rho|_{\Diff(\Fol)}:\Diff(\Fol) \to \Diff(\Sigma)$ is a locally trivial fibration as well}, i.e. that one can guarantee that $\msect(\beta)$ is also leaf preserving.

The aim of the present paper is to show that this is true for foliations that in a neighborhood of every singular leaf coincide with level sets of some Morse-Bott function, see Theorems~\ref{th:MB_func:ltf} and~\ref{th:MBfol:ltf}.
In particular, it will allow to relate homotopy groups of $\Diff(\Fol)$ and its subgroup $\Diff(\Fol,\Sigma)$, consisting of diffeomorphisms fixed on $\Sigma$, via homotopy groups of $\Diff(\Sigma)$, see Corollary~\ref{cor:long_seq_hom_groups}.
This would be useful when the dimension of $\Sigma$ is small.

\section{Preliminaries}\label{sect:prelim}
Throughout the paper the word ``smooth'' means $\Cinfty$, and the spaces of smooth maps are always endowed with the corresponding strong $\Cinfty$ topologies and their subspaces with the induces ones.

We will say that a not necessarily surjective map $\rho:\Eman\to\Bman$ is a \myemph{locally trivial fibration over its image} if the map $\rho:\Eman\to p(\Eman)$ is a locally trivial fibration.
It follows from the path lifting property for $\rho$ that $\rho(\Eman)$ is a union of certain path components of $\Bman$.
Moreover, if $\Eman_x$ is a path component of $\Eman$, then $\rho(\Eman_x)$ is a path component of $\Bman$ and the restriction map $p:\Eman_x\to p(\Eman_x)$ is a locally trivial fibration as well, see \ref{enum:lm:sect_hom:x:2} of Lemma~\ref{lm:sect_hom} below.

Let $\Mman$ be a smooth manifold.
By a \myemph{submanifold} of $\Mman$ will always mean an \myemph{embedded} submanifold.
For a subset $\Xman \subset\Mman$ denote by $\DiffMX$ the group of diffeomorphisms of $\Mman$ fixed on $\Xman$, and by $\DiffId(\Mman,\Xman)$ its identity path component.
If $\Fol$ is a (possibly singular) foliation on $\Mman$, then we will denote by $\Diff(\Fol,\Xman)$ the group of leaf-preserving diffeomorphisms of $\Mman$ fixed on $\Xman$, and by $\DiffId(\Fol,\Xman)$ its identity path component.
If $\Xman$ is empty, then we will omit it from notation. E.g. we write $\DiffM$ instead of $\Diff(\Mman,\varnothing)$ and $\Diff(\Fol)$ instead of $\Diff(\Fol,\varnothing)$.

A \myemph{restriction} $\Fol|_{\Xman}$ of a foliation $\Fol$ to a subset $\Xman \subset\Mman$ is a partition of $\Xman$ into connected components of $\Xman\cap\omega$ for all $\omega\in\Fol$.

Recall also that a family of mutually disjoint subsets $\{\Cman_i\}_{i\in\Lambda}$ of a topological space $\Mman$ is called \myemph{discrete} if each $\Cman_i$ has an open neighborhood $\Vman_i$ such that $\Vman_i\cap\Vman_j = \varnothing$ for $i\not=j\in\Lambda$.

\section{Morse-Bott maps}
Let $\Mman$ be a smooth manifold and $\Pman$ be either the real line $\bR$ or the circle $S^1$.
Notice also that there is a natural right action
\begin{align*}
&\nu:\Ci{\Mman}{\Pman} \times \DiffM \to \Ci{\Mman}{\Pman}, &
\nu(\func,\dif) &= \func\circ\dif.
\end{align*}
of the groups of diffemorphisms $\DiffM$ of $\Mman$ on the space $\Ci{\Mman}{\Pman}$ of smooth maps $\Mman\to\Pman$.
For $\func\in \Ci{\Mman}{\Pman}$ and a subset $\Xman\subset\Mman$ let
\begin{align*}
\Stabilizer{\func} &= \{ \dif\in\DiffM \mid \func\circ\dif = \func\},
&
\Stabilizer{\func,\Xman} &= \Stabilizer{\func} \cap \DiffMX
\end{align*}
be the \myemph{stabilizers} of $\func$ with respect to the above action of $\DiffM$ and the induced action of $\DiffMX$.
Let also $\StabilizerId{\func}$ and $\StabilizerId{\func,\Xman}$ be the identity path components of the corresponding stabilizers.

\begin{definition}\label{def:MB_map}
A smooth map $\func:\Mman\to\Pman$ will be called \myemph{Morse-Bott} if it satisfies the following conditions:
\begin{enumerate}[leftmargin=*, label={\rm(\arabic*)}, topsep=1ex, parsep=1ex]
\item\label{enum:MBF:constant_n_bd}
$\func$ takes a constant value at each connected component of $\partial\Mman$ and has no critical points in $\partial\Mman$;

\item\label{enum:MBF:Hess_non_deg}
the set $\fSing$ of critical points of $\func$ is a union of at most countable discrete family of boundaryless submanifolds $\{\Cman_i\}_{i\in\Lambda}$ and for each $x \in \Cman_i$ the Hessian of $\func$ at $x$ is non-degenerate in the direction transversal to $\Cman_i$.
\end{enumerate}
\end{definition}

If all critical points of a Morse-Bott map $\func$ are isolated, then $\func$ is called \myemph{Morse}.
The well known analogue of Morse lemma, e.g.~\cite[Theorem~2]{BanyagaHurtubise:EM:2004}, claims that condition~\ref{enum:MBF:Hess_non_deg} is equivalent to the following one:
{\em\begin{enumerate}[start=2, leftmargin=*, label={\rm(\arabic*$'$)}]
\item\label{enum:MBF:normal_form}
for every critical point $x\in\Mman$ of $\func$ there exist an open neighborhood $\Uman$ of $x$ in $\Mman$, an open neighborhood $\Vman$ of $\func(x)$ in $\Pman$, two open embeddings $\dif:\Uman\to\bR^{m}$ and $\phi:\Vman\to\bR$, and two integers $\lambda,k$ such that $0\le\lambda\le k\leq m$, $\dif(x)=0$, $\phi(\func(x))=0$, $\phi$ preserves orientation, and
\[
\phi\circ\func\circ\dif^{-1}(v_1,\ldots,v_m) = -v_1^2 - \cdots - v_{\lambda}^2 + v_{\lambda+1}^2 + \cdots + v_{k}^2.
\]
\end{enumerate}}

In~\cite{Maksymenko:OsakaJM:2011}, \cite{Maksymenko:AGAG:2006}, \cite{Maksymenko:UMZ:ENG:2012}, \cite{Maksymenko:TA:2018} the second author computed the homotopy types of $\StabilizerId{\func,\Xman}$ for a large class of smooth maps $\func:\Mman\to\Pman$ (which includes all Morse maps) on all compact surfaces $\Mman$, where $\Xman$ is a finite (possibly empty) collection of regular components of level sets of $\func$ and critical points.

\begin{theorem}\label{th:Stab_2dim}
{\rm \cite[Theorem~1.3]{Maksymenko:AGAG:2006}, \cite[Theorem~2.1]{Maksymenko:UMZ:ENG:2012}, \cite[Theorem~3.1]{Maksymenko:TA:2018}.}
Let $\Mman$ be a compact surface, $\func:\Mman\to\Pman$ be a Morse map, and $\Xman$ be a finite (possibly empty) collection of regular components of level sets of $\func$ and critical points.
Then $\StabilizerId{\func,\Xman}$ is contractible if and only if either of the following conditions holds true:
\begin{itemize}
\item $\Mman$ is non-orientable;
\item $\func$ has at least one saddle, i.e. a critical point of index $1$;
\item $\chi(\Mman) < |\Xman|$, e.g. this hold if $\Xman$ contains a regular component of some level set $\func^{-1}(c)$, $c\in\Pman$.
\end{itemize}
In all other cases $\StabilizerId{\func,\Xman}$ is homotopy equivalent to the circle.

Suppose $\Mman$ is orientable.
Let $\omega$ be any volume form on $\Mman$ and $\Stabilizer{\func,\omega}$ be the stabilizer of $\func$ with respect to the right action of the group of $\omega$-preserving diffeomorphisms.
Then its identity path component $\StabilizerId{\func,\omega}$ is an \myemph{abelian} group and the inclusion $\StabilizerId{\func,\omega} \subset \StabilizerId{\func}$ is a homotopy equivalence.
\qed
\end{theorem}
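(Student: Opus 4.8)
The plan is to compute the homotopy type of $\StabilizerId{\func,\Xman}$ by realising (up to homotopy) every element of the identity component as a ``shift'' along a flow tangent to the level sets of $\func$, and then to read off the dichotomy between contractibility and $S^1$ from whether that flow carries a global period. I would begin by recording the fibration underlying the action $\nu$: the orbit map $\DiffM\to\Orbit{\func}$, $\dif\mapsto\func\circ\dif$, is a locally trivial fibration over its image with fibre $\Stabilizer{\func}$ (the needed local sections are produced exactly as in the Cerf--Palais argument quoted in the introduction). Since the homotopy type of $\DiffM$ for a compact surface is classical (Earle--Eells, Gramain), it is enough to understand the stabilizer, and I would work with it directly. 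I would also pass to the subgroup preserving every connected component of every level set: any element of $\StabilizerId{\func,\Xman}$ lies there automatically, because the induced action on the finite Kronrod--Reeb graph of $\func$ (vertices $=$ critical level components, edges $=$ one-parameter families of regular circles) is through a discrete group of graph automorphisms, so the identity component acts trivially on it.

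The heart of the argument is the shift map. Fix a vector field $F$ on $\Mman$ tangent to all level sets of $\func$ and vanishing precisely on $\Sigma$; when $\Mman$ is oriented one may take the Hamiltonian field $F=\operatorname{sgrad}\func$ of an area form, so that $d\func(F)=0$. Let $\Phi:\Mman\times\bR\to\Mman$ be the flow of $F$; each $\Phi(\cdot,t)$ preserves $\func$ since $F$ is tangent to the level sets. For a smooth function $\alpha:\Mman\to\bR$ set
\[
\varphi(\alpha)(x) \;=\; \Phi\bigl(x,\alpha(x)\bigr).
\]
Then $\varphi(\alpha)\in\Stabilizer{\func}$ whenever it is a diffeomorphism, and it is isotopic to the identity through $\varphi(t\alpha)$, $t\in[0,1]$; so $\varphi$ maps the open set $\mathcal{E}$ of admissible shift functions into $\StabilizerId{\func}$. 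The set $\mathcal{E}$ is star-shaped about the zero function (the straight-line homotopy $t\mapsto t\alpha$ stays admissible), hence contractible. The two facts I would then establish are: (i) $\varphi$ is onto the identity component up to homotopy, i.e.\ every element of $\StabilizerId{\func,\Xman}$ is, after an isotopy inside the stabilizer, a shift $\varphi(\alpha)$; and (ii) $\varphi(\alpha)=\varphi(\beta)$ exactly when $\alpha-\beta$ is an integer multiple of the period function $\theta:\Mman\setminus\Sigma\to(0,\infty)$ measuring the time for $\Phi$ to traverse a regular level circle once. Granting these, $\varphi$ descends to a homotopy equivalence of the quotient of $\mathcal{E}$ by this relation onto $\StabilizerId{\func,\Xman}$.

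The dichotomy then falls out from the behaviour of $\theta$. If $\func$ has a saddle, or $\Mman$ is non-orientable, or $\Xman$ is large enough that $\chi(\Mman)<|\Xman|$, then $\theta$ admits no global extension: near a saddle $\theta\to\infty$, non-orientability prevents a coherent orientation of the level circles (hence a global rotation), and fixing a regular level component or too many critical points removes the rotation; in each case the relation in (ii) is trivial and $\StabilizerId{\func,\Xman}\simeq\mathcal{E}\simeq\ast$ is contractible. In the remaining case $\Mman$ is orientable, $\func$ has no saddle, and $\chi(\Mman)\ge|\Xman|$; by the Reeb sphere theorem (for $\Pman=\bR$) or the absence of critical points (for $\Pman=S^1$), $\Phi$ is a globally defined circle action, $\theta$ is a single constant, and $\mathcal{E}\to\mathcal{E}/\langle\theta\rangle$ is a $\bZ$-covering of a space with contractible total space, so $\StabilizerId{\func,\Xman}\simeq\bR/\bZ\simeq S^1$. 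For the final assertion I would observe that $F=\operatorname{sgrad}\func$ is $\omega$-preserving, so all shifts $\varphi(\alpha)$ already lie in $\StabilizerId{\func,\omega}$ and the admissible $\alpha$ are constant along each orbit; this makes $\StabilizerId{\func,\omega}$ abelian (shifts along one flow commute) and, since the same $\mathcal{E}$ computes both groups, shows the inclusion $\StabilizerId{\func,\omega}\hookrightarrow\StabilizerId{\func}$ is a homotopy equivalence.

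The main obstacle I expect is step (i): showing that the shift map reaches all of the identity component up to homotopy. This is a local-to-global problem whose difficulty is concentrated at the critical points, where $F$ vanishes and $\theta$ degenerates; one must control the shift functions in the strong $\Cinfty$ topology across the singular level sets and verify that a level-component-preserving diffeomorphism which is $\Cinfty$-close to the identity is realised by a \emph{smooth} admissible $\alpha$. Proving smoothness and surjectivity of $\varphi$ onto a neighbourhood of the identity, via a parametrised Morse-lemma and flow-box analysis near $\Sigma$, is the technical core from which the global homotopy equivalence is bootstrapped.
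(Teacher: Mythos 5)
The paper you were given contains no proof of this statement at all: Theorem~\ref{th:Stab_2dim} is quoted, with a \qed, from \cite[Theorem~1.3]{Maksymenko:AGAG:2006}, \cite[Theorem~2.1]{Maksymenko:UMZ:ENG:2012} and \cite[Theorem~3.1]{Maksymenko:TA:2018}, and the only hint the paper gives is the remark that the technique of those works rests on viewing $\StabilizerId{\func,\Xman}$, for orientable $\Mman$, inside the group of diffeomorphisms preserving orbits of the Hamiltonian flow of $\func$. Your sketch reconstructs exactly that strategy --- shifts $\varphi(\alpha)=\Phi(\cdot,\alpha(\cdot))$ along a flow tangent to the level sets, contractibility of the space of admissible shift functions, fibres of $\varphi$ given by integer multiples of the period function, the resulting contractible-versus-$S^1$ dichotomy, and the commuting-shifts explanation of the abelian statement (which is indeed the content of \cite{Maksymenko:TA:2018}). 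So at the level of architecture you have correctly identified the route taken in the literature.

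As a proof, however, the proposal has genuine gaps, and they sit exactly where the cited papers do their work. (1) Your step (i) --- that every element of $\StabilizerId{\func,\Xman}$ equals $\varphi(\alpha)$ for an admissible $\alpha$, with $\alpha$ depending continuously on the diffeomorphism in the strong $\Cinfty$ topology, so that $\varphi$ is a homeomorphism (respectively a $\bZ$-covering) onto its image and not merely a continuous bijection --- is not a finishing technicality to be ``bootstrapped'': it is the principal theorem of the ``local inverses of shift maps'' machinery \cite{Maksymenko:OsakaJM:2011}, \cite{Maksymenko:AGAG:2006}, proved by Sergeraert--Poenaru-type implicit-function arguments in Fr\'echet spaces near the singular level sets; flagging it as ``the technical core'' leaves the proof without its core, and note that even granting the fibre description (ii) you still need $\varphi$ to be a quotient map to conclude a homotopy equivalence. (2) The opening claim that the orbit map $\dif\mapsto\func\circ\dif$ is a locally trivial fibration ``exactly as in the Cerf--Palais argument'' is false as stated: Cerf--Palais concerns restriction of diffeomorphisms to a submanifold, whereas local triviality of orbit maps of the action $\nu$ on $\Ci{\Mman}{\Pman}$ is a much deeper fact (itself one of the theorems of the cited papers); fortunately your argument never actually uses it. (3) In the non-orientable case there is no global Hamiltonian flow, so ``non-orientability prevents a coherent orientation of the level circles'' restates the difficulty rather than resolving it; the cited treatment passes to the orientation double cover and argues equivariantly with respect to the deck involution. (4) The third bullet of the theorem only comes out right if $|\Xman|$ denotes the cardinality of $\Xman$ as a point set, so that a regular component contributes infinitely many points; with the component-counting reading suggested by your phrase ``too many critical points'', the criterion $\chi(\Mman)<|\Xman|$ would be false --- e.g.\ for the height function on $S^2$ with $\Xman$ a single regular circle one has $\chi(S^2)=2>1$, yet $\StabilizerId{\func,\Xman}$ is contractible, since rotations do not fix $\Xman$ pointwise and the shift functions vanishing on $\Xman$ form a convex set on which $\varphi$ is injective.
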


The technique of those papers is essentially two-dimensional and based on the observation that for orientable surfaces $\StabilizerId{\func,\Xman}$ is a subgroup of the group diffemorphisms preserving orbits of the Hamiltonian flow of $\func$.
Let us mention that the papers~\cite{Maksymenko:AGAG:2006}, \cite{Maksymenko:ProcIM:ENG:2010}, \cite{Maksymenko:DefFuncI:2014}, \cite{Kudryavtseva:MatSb:1999}, \cite{Kudryavtseva:SpecMF:VMU:2012}, \cite{Kudryavtseva:MathNotes:2012}, \cite{Kudryavtseva:MatSb:2013}, \cite{MaksymenkoFeshchenko:UMZ:ENG:2014}, \cite{MaksymenkoFeshchenko:MFAT:2015}, \cite{MaksymenkoFeshchenko:MS:2015} are devoted to computations of the homotopy types of orbits of Morse maps on compact surfaces with respect to the above actions.

\begin{theorem}\label{th:MB_func:ltf}
Let $\func:\Mman\to\Pman$ be a Morse-Bott map, and $\Cman_1,\ldots,\Cman_k$ be any finite collection of its \myemph{compact} critical submanifolds.
Let also $\Xman\subset\Mman \setminus \fSing$ be a closed (possibly empty) subset.
Then the maps
\begin{align}
\label{equ:restr_stab_MB}
&\rho: \Stabilizer{\func,\Xman} \to \Diff(\fSing), & \rho(\dif) &= \dif|_{\fSing}, \\
\label{equ:restr_stab0_MB}
&\rho_0: \StabilizerId{\func,\Xman} \to \DiffId(\fSing) \equiv \prod_{i=1}^{k} \DiffId(\Cman_i), & \rho_0(\dif)&= (\dif|_{\Cman_1}, \ldots, \dif|_{\Cman_k}),
\end{align}
are locally trivial fibrations over their images and the map $\rho_0$ is surjective.
\end{theorem}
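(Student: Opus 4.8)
\emph{Reduction to a single local section near the identity.}
Both $\rho$ and $\rho_0$ are continuous homomorphisms onto subgroups of $\Diff(\fSing)$, so, exactly as remarked in the introduction, it suffices to construct one continuous section defined on a neighborhood of the unit. For any continuous homomorphism $p:G\to H$ of topological groups, a continuous $\msect$ from a neighborhood $V$ of the unit in $p(G)$ into $G$ with $p\circ\msect=\id$ and $\msect(\id)=\id$ makes $p:G\to p(G)$ a locally trivial fibration: over the neighborhood $\beta V$ of any $\beta=p(\gdif)$ one trivializes $p^{-1}(\beta V)\cong \beta V\times\ker p$ by $(\cfunc,n)\mapsto \gdif\cdot\msect(\beta^{-1}\cfunc)\cdot n$. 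Such a section also shows that the image is a subgroup containing a neighborhood of the unit, hence open; taking $V$ path-connected puts $\msect(V)$ in the identity component, so $\rho_0$ has image an open subgroup of the connected group $\DiffId(\fSing)$, whence $\rho_0$ is surjective.

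\emph{Local model along $\fSing$.}
Using the fibered Morse–Bott normal form~\ref{enum:MBF:normal_form}, choose for each $\Cman_i$ a tubular neighborhood $\Uman_i$ identified with the total space of the normal bundle $\Eman_i\to\Cman_i$, carrying a splitting $\Eman_i=\Eman_i^-\oplus\Eman_i^+$ into the negative and positive eigenbundles of the Hessian and fiber metrics for which $\func=c_i-|v^-|^2+|v^+|^2$ on $\Uman_i$, where $v=v^-+v^+$ and $c_i$ is the constant value of $\func$ on $\Cman_i$. Since $\fSing$ and $\Xman$ are disjoint compacta, we may take the $\Uman_i$ mutually disjoint and disjoint from $\Xman$. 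Equip $\Eman_i$ with a connection $\nabla$ that preserves the splitting and is metric on each summand; its parallel transport is then a fiberwise linear isometry preserving $\Eman_i^\pm$, hence preserving the quadratic form $-|v^-|^2+|v^+|^2$, i.e.\ preserving $\func$.

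\emph{Construction of the section.}
Fix an auxiliary Riemannian metric on each $\Cman_i$. For $\phi=(\phi_1,\dots,\phi_k)$ with each $\phi_i$ close to $\id_{\Cman_i}$, let $\phi_i^s(x)=\exp_x\bigl(s\,\exp_x^{-1}\phi_i(x)\bigr)$ be the canonical geodesic isotopy from $\id$ to $\phi_i$, and lift it to an isotopy $\Phi_i^s$ of $\Eman_i$ by $\nabla$-parallel transport of each fiber along $s\mapsto\phi_i^s(x)$. By the previous paragraph every $\Phi_i^s$ satisfies $\func\circ\Phi_i^s=\func$, so its generating time-dependent field $w_i^s$ is tangent to the level sets of $\func$, that is $d\func(w_i^s)=0$. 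Pick a bump function $\mu_i$ equal to $1$ near $\Cman_i$ and supported in $\Uman_i$. Since $d\func(\mu_i w_i^s)=\mu_i\,d\func(w_i^s)=0$, the field $\mu_i w_i^s$ is again $\func$-tangent; let $\Psi_i$ be the time-$1$ map of its (complete, compactly supported) flow. Then $\Psi_i$ preserves $\func$, equals $\Phi_i$ near $\Cman_i$ so that $\Psi_i|_{\Cman_i}=\phi_i$ (the zero section is $\Phi_i^s$-invariant), and is the identity outside $\Uman_i$, in particular on $\Xman$. As the $\Uman_i$ are disjoint, $\msect(\phi):=\Psi_1\circ\cdots\circ\Psi_k$ lies in $\Stabilizer{\func,\Xman}$, restricts to $\phi$ on $\fSing$, sends $\id$ to $\id$, and depends continuously on $\phi$. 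This is the required local section.

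\emph{The main obstacle.}
The crux is the middle step: extending a base diffeomorphism to an \emph{$\func$-preserving} diffeomorphism of the tubular neighborhood, continuously in $\phi$ and without choosing non-canonical isotopies. The device that makes this work is the splitting-preserving metric connection (an Ehresmann connection for the normal directions), whose parallel transport automatically respects the Morse–Bott quadratic form; the subsequent gluing is painless precisely because tangency to the levels of $\func$ survives multiplication by arbitrary functions. The only points requiring genuine care are that $\phi_i$ be close enough to $\id_{\Cman_i}$ for the geodesic isotopy and its lift to remain diffeomorphisms, and the verification that all of the above (geodesics, parallel transport, cut-off, and flows) depends continuously on $\phi$ in the strong $\Cinfty$ topology.
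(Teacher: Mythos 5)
Your first paragraph and the overall architecture coincide with the paper's: local triviality is reduced to producing a single continuous section of $\rho_0$ near $\id_{\fSing}$ (this is exactly the paper's Lemma~\ref{lm:sect_hom}, including your openness-plus-connectedness argument for surjectivity of $\rho_0$), and the section is built by lifting a canonical isotopy of $\fSing$ joining $\id_{\fSing}$ to a nearby $\phi$ (geodesic in your version; affine, via an embedding $\fSing\subset\bR^n$ and a tubular neighborhood, in the paper's Lemma~\ref{lm:sect_at_id}) through an $\func$-preserving parallel transport, then cutting off to obtain support near $\fSing$ and the identity on $\Xman$. Your variant of the cut-off (damping the generating field after lifting, using $d\func(\mu_i w_i^s)=\mu_i\,d\func(w_i^s)=0$) is sound and is an acceptable substitute for the paper's cut-off of the base homotopy before lifting.

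The gap is in your ``local model'' paragraph. Condition~\ref{enum:MBF:normal_form} is local at points: each critical point has a chart in which $\func$ becomes the quadratic form. You upgrade this, with no argument, to a tubular neighborhood of the whole submanifold $\Cman_i$, identified with the normal bundle $\Eman_i=\Eman_i^-\oplus\Eman_i^+$ and carrying fiber metrics for which $\func=c_i-|v^-|^2+|v^+|^2$ globally over $\Cman_i$. That is a strictly stronger statement (a ``global Morse--Bott lemma''): the pointwise normal-form charts are not fiberwise-linearly compatible on overlaps, so they do not patch into such a bundle chart; the global form is true for compact $\Cman_i$, but it needs its own proof (e.g.\ a fiberwise Moser deformation), of comparable weight to the theorem you are proving. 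Everything downstream rests on it: without the global quadratic form, a metric splitting-preserving linear connection has no reason to have $\func$-preserving parallel transport. (A smaller slip: the normal form cannot hold on the entire total space of $\Eman_i$ identified with the relatively compact $\Uman_i$, since the quadratic form is unbounded there; you need a neighborhood of the zero section.) The paper is organized precisely to avoid this globalization: it uses only the fiberwise \emph{local} Morse lemma (Lemma~\ref{lm:fiberwise_morse_lemma}) to build, near each point of $\Cman_i$, a local Ehresmann connection whose horizontal bundle lies in $\ker d\func$, and then glues these by a partition of unity; the gluing is legitimate because connections whose horizontal bundles lie in a fixed distribution form a convex set (Lemma~\ref{lm:Ehresmann_conn_convex}, Corollary~\ref{cor:bott_func_has_conn}). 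If you replace your linear connection by this glued nonlinear one, your argument survives essentially verbatim, except that you must then also address completeness --- horizontal lifts of a merely local Ehresmann connection need not exist up to time $1$, which the paper handles in Lemma~\ref{lm:existence_complete_part_conn} --- an issue that your linear connection, had it existed, would have made invisible.
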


This theorem can be regarded as a variant of the result by J.~Cerf~\cite{Cerf:BSMF:1961} and Palais~\cite{Palais:CMH:1960} on local triviality of restrictions to critical submanifolds of a Morse-Bott function $\func$ for $\func$-preserving diffeomorphisms.

In fact Theorem~\ref{th:MB_func:ltf} holds for more general classes of maps, see Theorem~\ref{th:func_ltf}.
On the other hand it says nothing for Morse maps, i.e.\! when each $\Cman_i$ is a point.
\begin{remark}\rm
Evidently,
\begin{align*}
\ker(\rho)&= \Stabilizer{\func,\Xman\cup\fSing}, &
\ker(\rho_0) &= \Diff(\Mman,\fSing) \cap \StabilizerId{\func,\Xman},
\end{align*}
and these groups are the fibres of the corresponding fibrations~\eqref{equ:restr_stab_MB} and~\eqref{equ:restr_stab0_MB}.
Notice that $\ker(\rho_0)$ differs from $\StabilizerId{\func,\Xman \cup \fSing}$ as one may expect at first glance.
In fact we have only the inclusion
\[\StabilizerId{\func,\Xman \cup \fSing} \ \subset \ \Diff(\Mman,\fSing) \cap \StabilizerId{\func,\Xman}.\]
\end{remark}

There are two standard applications of Theorem~\ref{th:MB_func:ltf}.
\begin{corollary}[Ambient isotopy extension for critical submanifolds]\label{cor:homotopy_lifting_prop}
Every smooth isotopy $\aHmt:\fSing\times[0,1] \to \fSing$ with $\aHmt_0=\id_{\fSing}$ extends to an isotopy $\aHmt:\Mman\times[0,1] \to \Mman$ such that $\aHmt_0=\id_{\Mman}$ and $\aHmt_t\in\Stabilizer{\func,\Xman}$ for all $t\in[0,1]$.
\end{corollary}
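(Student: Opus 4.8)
The plan is to realise the statement as an instance of the path lifting property of the locally trivial fibration $\rho_0$ from Theorem~\ref{th:MB_func:ltf}. First I would regard the prescribed isotopy as a path
\[
\gamma:[0,1]\to\Diff(\fSing),\qquad \gamma(t)=\aHmt_t,\qquad \gamma(0)=\id_{\fSing},
\]
which is smooth, and in particular continuous in the $\Cinfty$ topology. Being a path issuing from the identity, its image lies in the identity path component $\DiffId(\fSing)=\prod_{i=1}^{k}\DiffId(\Cman_i)$, and the point $\id_{\Mman}\in\StabilizerId{\func,\Xman}$ is a lift of $\gamma(0)$ since $\rho_0(\id_{\Mman})=\id_{\fSing}$. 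A locally trivial fibration is a Serre fibration, hence $\rho_0$ enjoys the path lifting property; lifting $\gamma$ through $\id_{\Mman}$ yields a continuous path $\widetilde\gamma:[0,1]\to\StabilizerId{\func,\Xman}$ with $\widetilde\gamma(0)=\id_{\Mman}$ and $\rho_0\circ\widetilde\gamma=\gamma$. Setting $\aHmt(x,t)=\widetilde\gamma(t)(x)$ then produces a family of diffeomorphisms with $\aHmt_0=\id_{\Mman}$, each $\aHmt_t\in\Stabilizer{\func,\Xman}$, and $\aHmt_t|_{\fSing}=\gamma(t)$; this already settles the statement at the level of a continuous family.

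The hard part will be upgrading this lift to a genuinely smooth isotopy $\aHmt:\Mman\times[0,1]\to\Mman$, that is, one jointly smooth in $(x,t)$, since abstract path lifting only guarantees continuity into $\Diff(\Mman)$. To handle this I would pass to the infinitesimal picture and integrate. The path $\gamma$ is generated by the time-dependent vector field $u_t=\dot\gamma(t)\circ\gamma(t)^{-1}$ on $\fSing$, smooth in $(x,t)$ and tangent to each $\Cman_i$; since $\func$ is constant on $\Cman_i$, one has $d\func(u_t)=0$ along $\fSing$. I would then seek a time-dependent vector field $V_t$ on $\Mman$ satisfying $d\func(V_t)=0$, $V_t|_{\Xman}=0$, and $V_t|_{\fSing}=u_t$, and recover the isotopy as its flow.

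The key observation making this lift routine is that all three constraints are affine in $V_t$, so local solutions patch by a partition of unity. Near a point of $\fSing$ the Morse--Bott normal form (condition~\ref{enum:MBF:normal_form}), which is the local structure underlying Theorem~\ref{th:MB_func:ltf}, provides an $\func$-preserving extension of $u_t$; away from $\fSing$, where $\func$ is a submersion, any field tangent to the fibres of $\func$ and vanishing near $\Xman$ works. Choosing the charts meeting $\fSing$ to be disjoint from the closed set $\Xman$ and combining these local lifts with a subordinate partition of unity $\{\phi_\alpha\}$, the sum $V_t=\sum_\alpha\phi_\alpha V_t^{(\alpha)}$ still satisfies $d\func(V_t)=0$ and $V_t|_{\Xman}=0$, and restricts to $u_t$ on $\fSing$ because only the near-$\fSing$ pieces contribute there. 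Finally, since $\Mman$ is compact the flow of $V_t$ is defined for all $t\in[0,1]$, depends smoothly on $(x,t)$, and its time-$t$ maps $\aHmt_t$ form a smooth isotopy that fixes $\Xman$, preserves $\func$ (as $\tfrac{d}{dt}\func(\aHmt_t(x))=d\func(V_t)=0$) and is automatically tangent to $\partial\Mman$, and restricts on $\fSing$ to the flow of $u_t$, namely the given isotopy $\gamma(t)$. The only point demanding care is the local $\func$-preserving extension of $u_t$ across $\fSing$, which is precisely where the Morse--Bott hypothesis enters.
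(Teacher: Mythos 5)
Your proposal is correct, and its first paragraph is precisely the paper's own (implicit) proof: Corollary~\ref{cor:homotopy_lifting_prop} is offered as a ``standard application'' of Theorem~\ref{th:MB_func:ltf}, the intended argument being exactly path lifting for the locally trivial fibration $\rho_0$ through the lift $\id_{\Mman}$ of $\id_{\fSing}$. Where you genuinely depart from the paper is in the second half. You rightly note that abstract path lifting only yields a path $t\mapsto\widetilde\gamma(t)$ continuous into $\Diff(\Mman)$ with its $\Cinfty$ topology, hence an extension that is jointly continuous but not obviously jointly smooth in $(x,t)$; the paper is silent on this (its statement says only ``isotopy'' for the extension). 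Your remedy --- extend the generating time-dependent field $u_t$ of the given isotopy to a field $V_t$ on $\Mman$ with $d\func(V_t)=0$, $V_t|_{\Xman}=0$ and $V_t|_{\fSing}=u_t$, by combining Morse--Bott normal-form charts near $\fSing$ with the zero field elsewhere via a partition of unity (legitimate since all three constraints are affine in $V_t$), then integrate --- is sound, and it is worth observing that it bypasses Theorem~\ref{th:MB_func:ltf} entirely: it is a self-contained proof in the same spirit as \S\ref{sect:fiber_wise_Morse} of the paper, where $\func$-preserving Ehresmann connections are glued from normal-form charts using convexity (Corollary~\ref{cor:bott_func_has_conn}). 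The trade-off between the two routes: the fibration argument is formal, one line long, and lifts not just paths but families over arbitrary cubes or CW complexes --- this is what Corollary~\ref{cor:long_seq_hom_groups} rests on --- whereas your vector-field argument delivers the stronger conclusion of a genuinely smooth ambient isotopy, with the two delicate points (tangency of $V_t$ to $\partial\Mman$, which holds because boundary components are unions of components of level sets of $\func$, and the identification of the restricted flow with the given isotopy via uniqueness of solutions of ODEs) checked correctly.
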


\begin{corollary}\label{cor:long_seq_hom_groups}
There is a long exact sequence of homotopy groups:
\begin{multline*}
\cdots
\to
\prod_{i=1}^{k} \pi_{q+1} \DiffId(\Cman_i)
\to \pi_q \StabilizerId{\func,\Xman\cup\fSing} \to \pi_q \StabilizerId{\func,\Xman} \to
\cdots  \\ \cdots \to
 \prod_{i=1}^{k} \pi_1 \DiffId(\Cman_i)
\to \pi_0 \Stabilizer{\func,\Xman\cup\fSing} \to \pi_0 \Stabilizer{\func,\Xman} \to
 \pi_0 \rho(\Stabilizer{\func,\Xman}) \to 1,
\end{multline*}
where the (omitted) base points are the corresponding identity maps.
\end{corollary}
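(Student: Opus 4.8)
The plan is to read off the asserted sequence as the homotopy long exact sequence of the fibration $\rho$ supplied by Theorem~\ref{th:MB_func:ltf}. First I would record the data of this fibration. By Theorem~\ref{th:MB_func:ltf} the restriction map $\rho:\Stabilizer{\func,\Xman}\to\Diff(\fSing)$ is a locally trivial fibration over its image $\rho(\Stabilizer{\func,\Xman})$, and therefore a Serre fibration onto that image. Choosing $\id$ as base point, its fibre is $\rho^{-1}(\id)=\Stabilizer{\func,\Xman\cup\fSing}$, the group of $\func$-preserving diffeomorphisms fixed on $\Xman\cup\fSing$.

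Next I would write down the homotopy long exact sequence of this fibration based at $\id$:
\begin{multline*}
\cdots \to \pi_{q+1}\bigl(\rho(\Stabilizer{\func,\Xman})\bigr) \to \pi_q\Stabilizer{\func,\Xman\cup\fSing} \to \pi_q\Stabilizer{\func,\Xman} \to \pi_q\bigl(\rho(\Stabilizer{\func,\Xman})\bigr) \\
\to \cdots \to \pi_0\Stabilizer{\func,\Xman\cup\fSing} \to \pi_0\Stabilizer{\func,\Xman} \to \pi_0\bigl(\rho(\Stabilizer{\func,\Xman})\bigr) \to 1,
\end{multline*}
where exactness at the low end is understood for pointed sets and the final surjection reflects that $\rho$ is onto its image. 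It then remains to identify the homotopy groups of the base. The crucial point is that the path component of $\id$ in $\rho(\Stabilizer{\func,\Xman})$ is precisely $\DiffId(\fSing)$: by the general remark in Section~\ref{sect:prelim} the set $\rho(\StabilizerId{\func,\Xman})$ is a whole path component of $\Diff(\fSing)$, and since it contains $\id$ it must equal $\DiffId(\fSing)$---equivalently, this is the surjectivity of $\rho_0$ asserted in Theorem~\ref{th:MB_func:ltf}. Using the product decomposition $\DiffId(\fSing)\equiv\prod_{i=1}^{k}\DiffId(\Cman_i)$, I obtain $\pi_q\bigl(\rho(\Stabilizer{\func,\Xman})\bigr)\cong\prod_{i=1}^{k}\pi_q\DiffId(\Cman_i)$ for every $q\ge 1$.

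Finally, since homotopy groups based at $\id$ only see the path component of $\id$, for every $q\ge 1$ one has $\pi_q\Stabilizer{\func,\Xman}=\pi_q\StabilizerId{\func,\Xman}$ and $\pi_q\Stabilizer{\func,\Xman\cup\fSing}=\pi_q\StabilizerId{\func,\Xman\cup\fSing}$ automatically. Substituting these identifications into the sequence above gives exactly the claimed long exact sequence, the full stabilizers $\Stabilizer{\func,\Xman\cup\fSing}$, $\Stabilizer{\func,\Xman}$ and the image $\rho(\Stabilizer{\func,\Xman})$ surviving only at the $\pi_0$ level. The argument is formal once Theorem~\ref{th:MB_func:ltf} is available; the only step that needs genuine care---and hence the main (if mild) obstacle---is the bookkeeping of identity components versus full groups, in particular checking that the $\id$-component of the image is $\DiffId(\fSing)$ and not a larger union of components of $\Diff(\fSing)$ (the latter would include diffeomorphisms permuting mutually diffeomorphic critical submanifolds $\Cman_i$, which are not connected to $\id$).
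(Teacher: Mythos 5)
Your proposal is correct and follows essentially the same route as the paper: both write down the homotopy long exact sequence of the fibration $\rho:\Stabilizer{\func,\Xman}\to\rho\bigl(\Stabilizer{\func,\Xman}\bigr)$ from Theorem~\ref{th:MB_func:ltf} with fibre $\Stabilizer{\func,\Xman\cup\fSing}$, replace each group by its identity component for $q\ge 1$, and use $\DiffId(\fSing)\cong\prod_{i=1}^{k}\DiffId(\Cman_i)$. Your extra care in checking that the $\id$-component of the image is exactly $\DiffId(\fSing)$ (via surjectivity of $\rho_0$ and the fact that the image is a union of path components) is precisely the bookkeeping the paper leaves implicit.
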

\begin{proof}
This is a usual sequence of homotopy groups
\[
\cdots \to
\pi_{q} \bigl(\Stabilizer{\func,\Xman\cup\fSing}, \id_{\Mman}\bigr) \to
\pi_{q} \bigl(\Stabilizer{\func,\Xman}, \id_{\Mman}\bigr) \to
\pi_{q} \bigl(\Diff(\fSing), \id_{\fSing}\bigr) \to \cdots
\]
of the fibration
$\rho:\Stabilizer{\func,\Xman} \to  \rho\bigl( \Stabilizer{\func,\Xman} \bigr) \subset\Diff(\fSing)$.
We just replaced for $q\geq1$ each group with its identity path component and took to account that $\DiffId(\fSing)$ is contained in $\rho\bigl( \Stabilizer{\func,\Xman} \bigr)$, see \ref{enum:lm:sect_hom:x:2} of Lemma~\ref{lm:sect_hom} below, and is naturally isomorphic with $\prod\limits_{i=1}^{k} \DiffId(\Cman_i)$.
\end{proof}

\subsection{Homotopy types of diffeomorphisms groups}
The following table contains a description of homotopy types of $\DiffId$ for all connected closed manifolds of dimensions $1$ and $2$, see e.g. \cite{Smale:ProcAMS:1959}, \cite{Gramain:ASENS:1973}, \cite{EarleEells:DG:1970}.
We present this information just for the convenience of the reader.
This would allow to relate homotopy groups of $\StabilizerId{\func,\Xman}$ and $\StabilizerId{\func,\Xman\cup\fSing}$ via Corollary~\ref{cor:long_seq_hom_groups}.

\begin{center}
\begin{tabular}{|l|c|c|}\hline
$\Cman$ & $\dim$ & $\DiffId(\Cman)$ \\ \hline
$S^1$   & $1$    & $S^1$ \\\hline
$S^2$, $\bR P^2$ & $2$   & $\SO(3)$ \\
$S^1\times S^1$  &       & $S^1\times S^1$ \\
$D^2$, $S^1\times I$, Klein bottle  &       & $S^1$ \\
all other cases &       & point \\ \hline
\end{tabular}
\end{center}
In particular, this allows to get some information about the homotopy type of $\StabilizerId{\func}$ for Morse-Bott maps from $3$-manifolds as their critical submanifolds have dimensions $\leq 2$.

\medskip

The homotopy types of $\Diff(\Cman)$ for $3$-manifolds are studied not so well.
Recall that the \myemph{Smale conjecture} holds for a Riemannian $3$-manifold $\Cman$ whenever the natural inclusion $\mathrm{Isom}(\Cman) \subset  \Diff(\Cman)$ of its isometry group into the groups of diffeomorphisms is a homotopy equivalence.
This conjecture was verified e.g. for all closed hyperblic $3$-manifolds (D.~Gabai~\cite{Gabai:JDG:2001}),
the unit $3$-sphere $S^3$ with the standard metric (A.~Hatcher~\cite{Hatcher:AnnM:1983}), and more generally for many classes of elliptic $3$-manifolds, e.g.~\cite{HongKalliongisMcCulloughRubinstein:LMN:2012}.
Chapter~1 of the latter book \cite{HongKalliongisMcCulloughRubinstein:LMN:2012} also contains a comprehensive historical description and the current state of Smale conjecture.

Let us also mention two other results:
\begin{itemize}[leftmargin=3ex, itemsep=0.5ex]
\item $\Diff(S^1\times S^2)$ is homotopy equivalent to $O(2)\times O(3) \times \Omega O(3)$, where $\Omega O(3)$ is a loop space of $O(3)$, A.~Hatcher~\cite{Hatcher:ProcAMS:1981};
\item if $\Cman$ is a closed orientable Haken manifold that does not contain embedded projective plane $\bR P^2$, then $\DiffId(\Cman)$ is homotopy equivalent to one of the following spaces: a point, $S^1$, $S^1\times S^1\times S^1$, A.~Hatcher~\cite{Hatcher:Top:1976}.
\end{itemize}

In dimensions $\geq4$ the situation is very unclear and mostly concerned with computations of the mapping class group $\pi_0\Diff(\Cman)$ and finding nontrivial elements in homotopy groups $\pi_q\DiffId(\Cman)$, e.g. \cite{Novikov:IzvAN:1965}, \cite{Schultz:Top:1971}, \cite{Hajduk:BP:1978}, \cite{DwyerSzczarba:IJM:1983}.

\subsection{Structure of the paper}
In~\S\ref{sect:fol_MB_sing} we define foliations generated by Morse-Bott functions and formulate an analogue of Theorem~\ref{th:MB_func:ltf} for leaf-preserving diffeomorphisms for such foliations, see~Theorem~\ref{th:MBfol:ltf}.
In \S\ref{sect:ehresmann_conn} we recall a notion of an Ehresmann connection in a fibre bundle and some of their properties: convexity and homotopy lifting property, see Lemmas~\ref{lm:Ehresmann_conn_convex} and \ref{lm:local_homotopy_ext}.
\S\ref{sect:loc_triv_emb} is devoted to the proof of a well-known ``local triviality for restrictions to a submanifold'', Theorem~\ref{th:loc_triv_fibr}.
Our proof is a modification of E.~Lima~\cite{Lima:CMH:1964} and uses Ehresmann connections, see Lemma~\ref{lm:sect_at_id}.
In \S\ref{sect:fiber_wise_Morse} we show that in a tubular neighborhood of a critical submanifold of a Morse-Bott map $\func$ there exists an Ehresmann connection ``preserving $\func$'', in the sense that the differential $d\func$ annihilates horizontal bundle of that connection.
Finally in \S\ref{sect:diff_leaf_pres_fol} we prove Theorem~\ref{th:Zfol:ltf} extending Theorem~\ref{th:MBfol:ltf}, and in~\S\ref{sect:stabs_smooth_maps_MP} deduce from it Theorem~\ref{th:func_ltf} which extends Theorem~\ref{th:MB_func:ltf}.

\section{Morse-Bott foliations}\label{sect:fol_MB_sing}

\begin{definition}\label{def:MB_fol}{\rm cf.~\cite{ScarduaSeade:JDG:2009}.}
Let $\Fol$ be a partition of a smooth manifold $\Mman$ and $\{\Cman_i\}_{i\in\Lambda}$ be a \myemph{discrete} family of elements of $\Fol$.
We say that $\Fol$ is a \myemph{Morse-Bott foliation} with singularities $\{\Cman_i\}_{i\in\Lambda}$ whenever
\begin{enumerate}[label={\rm(\alph*)}, leftmargin=*]
\item\label{enum:MBfol:regular_out_of_sing}
the restriction of $\Fol$ to $\Mman\setminus \mathop{\cup}\limits_{i\in\Lambda}\Cman_i$
is a smooth codimension one foliation and every connected component of $\partial\Mman$ is an element of $\Fol$;

\item\label{enum:MBfol:Ci}
every $\Cman_i$ is a smooth boundaryless submanifold of $\Mman$ and $\dim\Cman_i<\dim\Mman$;

\item\label{enum:MBfol:local_normal_form}
for each $i\in\Lambda$ and $x\in\Cman_i$ there exist two numbers $\lambda \leq k \in \{0,\ldots,m\}$, an open neighborhood $\Uman$ of $x$ in $\Mman$, and an open embedding $\phi:\Uman\to\bR^{m}$ such that $\phi(x)=0$, and for every leaf $\omega$ of $\Fol$ with $\omega\cap\Uman\not=\varnothing$ and every connected component $\Kman$ of $\omega\cap\Uman$ the image $\phi(\Kman)$ is a connected component of some level set of the function $\func:\phi(\Uman)\to\bR$ defined by
\[
\func(v_1,\ldots,v_m) =
-v_1^2 - \cdots - v_{\lambda}^2 + v_{\lambda+1}^2 + \cdots + v_{k}^2
\]
for $(v_1,\ldots,v_m)\in\phi(\Uman)$.
\end{enumerate}
In that case each $\Cman_i$ will be called a \myemph{singluar} leaf of $\Fol$.
\end{definition}

The following lemma shows that every regularly embedded leaf of $\Fol$ can be regarded as a ``singular leaf of Morse-Bott type''.

\begin{lemma}
Let $\Fol$ be a Morse-Bott foliation on $\Mman$ with singularities $\{\Cman_i\}_{i\in\Lambda}$, $\Sigma = \mathop{\cup}\limits_{i\in\Lambda}\Cman_i$, and $\Cman$ be a leaf of $\Fol$ being also an embedded submanifold of $\Mman$ and having an open neighborhood $\Vman$ with $\overline{\Vman} \cap \Sigma = \varnothing$.
Then $\Fol$ is also a Morse-Bott foliation with singularities $\{\Cman\} \cup \{\Cman_i\}_{i\in\Lambda}$.
\end{lemma}
\begin{proof}
Evidently, $\{\Cman\} \cup \{\Cman_i\}_{i\in\Lambda}$ is a discrete family of submanifolds of $\Mman$ also satisfying conditions~\ref{enum:MBfol:regular_out_of_sing} and~\ref{enum:MBfol:Ci} of Definition~\ref{def:MB_fol}.
Therefore one need only to check condition~\ref{enum:MBfol:local_normal_form} for $\Cman$.
As $\Cman$ is a leaf of a codimension one foliation $\Fol|_{\Mman\setminus\Sigma}$, for each $x\in\Cman$ there exist an open neighborhood $\Uman$ and a diffeomorphism $\phi:\Uman\to\bR^{m}$ such that $\phi(x)=0$, and for every leaf $\omega$ with $\omega\cap \Uman \not=\varnothing$ and every connected component $\Kman$ of $\omega\cap\Uman$ its image $\phi(\Kman)$ coincides with the plane $\{v_1 = c\}$ for some $c\in\bR$.
But this plane is a connected component of a certain level set of the function $\func:\bR^m \to\bR$ defined by $\func(v_1,\ldots,v_m) = v_1^2$.
Hence condition~\ref{enum:MBfol:local_normal_form} holds, and so $\Fol$ is also a Morse-Bott foliation with singularities $\{\Cman\} \cup \{\Cman_i\}_{i\in\Lambda}$.
\end{proof}

It is proved in~\cite[Theorem~B]{ScarduaSeade:JDG:2009} that if $\Fol$ is a foliation with Morse-Bott singularities, and a singular leaf $\Cman_i$ has a finite holonomy group (e.g.\! finite fundamental group), then one can find an open neighborhood $\Nman_i$ of $\Cman_i$ and a Morse-Bott function $\func:\Nman_i\to\bR$ such that $\Cman_i$ is the set of all critical points of $\func$, and the restriction of $\Fol$ to $\Nman_i\setminus\Cman_i$ consists of connected components of sets $\func^{-1}(c) \setminus \Cman_i$, $c\in\bR$.

\begin{theorem}\label{th:MBfol:ltf}
Let $\Fol$ be a Morse-Bott foliation on a smooth manifold $\Mman$, $\Cman_1,\ldots,\Cman_k$ be any finite collection of its \myemph{compact} singular leaves, and $\Sigma = \mathop{\cup}\limits_{i=1}^{k}\Cman_i$.
Then for every closed (possibly empty) subset $\Xman\subset\Mman \setminus \Sigma$ the ``restriction to $\Sigma$'' homomorphisms
\begin{align*}
&\rho: \Diff(\Fol,\Xman) \to \Diff(\Sigma), &
\rho(\dif) &= \dif|_{\Sigma}, \\
&\rho_{0}: \DiffId(\Fol,\Xman) \to \prod_{i=1}^{k} \DiffId(\Cman_i), &
\rho_{0}(\dif) &= (\dif|_{\Cman_1}, \ldots, \dif|_{\Cman_k}),
\end{align*}
are locally trivial fibrations over their images and $\rho_0$ is surjective.
\end{theorem}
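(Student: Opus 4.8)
The plan is to reduce the statement to the construction of a continuous local section of $\rho$ at the identity. Since $\rho$ is a homomorphism of topological groups and its image is therefore a subgroup of $\Diff(\Sigma)$, it will suffice to produce a neighborhood $\VV$ of $\id_{\Sigma}$ in $\rho(\Diff(\Fol,\Xman))$ together with a continuous map $\sigma:\VV\to\Diff(\Fol,\Xman)$ satisfying $\rho\circ\sigma=\id_{\VV}$ and $\sigma(\id_{\Sigma})=\id_{\Mman}$: left translation by elements of $\Diff(\Fol,\Xman)$ then transports $\sigma$ to local sections over every point of the image, whence $\rho$ is a locally trivial fibration over its image with fibre $\ker\rho$, and likewise for $\rho_0$, as recalled in \S\ref{sect:prelim}. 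First I would fix mutually disjoint tubular neighborhoods $\Nman_i$ of the singular leaves $\Cman_i$, chosen so small that each $\Nman_i$ is disjoint from the closed set $\Xman$ and carries a Morse-Bott function $\func_i$ with $\Cman_i=\func_i^{-1}(0)$ equal to its critical set and with the leaves of $\Fol$ inside $\Nman_i$ being connected components of level sets of $\func_i$; this is precisely condition~\ref{enum:MBfol:func_near_Ci}. Identifying $\Nman_i$ with the total space of the normal disk bundle $\pi_i:\Nman_i\to\Cman_i$, the Morse-Bott normal form makes $\func_i$ fibrewise a nondegenerate quadratic form.

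The geometric heart of the argument is the $\func$-preserving Ehresmann connection produced in \S\ref{sect:fiber_wise_Morse}: on each $\Nman_i$ there is a horizontal distribution $H\Nman_i\subset\ker d\func_i$ projecting isomorphically onto $T\Cman_i$. The decisive observation is that any vector field tangent to $H\Nman_i$ is annihilated by $d\func_i$, so its flow preserves every level set of $\func_i$ and hence preserves the leaves of $\Fol$ in $\Nman_i$; moreover, since $H\Nman_i$ is a linear subbundle, multiplying such a field by any function keeps it horizontal, hence keeps its flow leaf-preserving. Along the zero section $H\Nman_i$ coincides with $T\Cman_i$, so horizontal flows also preserve $\Cman_i$ and restrict there to the flow of the projected field.

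With these ingredients the section is built as follows. For $\beta\in\VV$ near $\id_{\Sigma}$, using a fixed tubular neighborhood of the diagonal in each $\Cman_i\times\Cman_i$ (equivalently a Riemannian exponential map) I would associate to $\beta|_{\Cman_i}$ a canonical isotopy $\beta^{\,t}_i$ from $\id_{\Cman_i}$ to $\beta|_{\Cman_i}$ depending continuously on $\beta$, and let $u^t_i$ be its time-dependent generating vector field on $\Cman_i$. Horizontally lifting $u^t_i$ through the connection (Lemmas~\ref{lm:Ehresmann_conn_convex} and~\ref{lm:local_homotopy_ext}) and multiplying the lift by a cut-off $\chi_i:\Nman_i\to[0,1]$ equal to $1$ near $\Cman_i$ and $0$ near $\partial\Nman_i$ yields a compactly supported time-dependent horizontal field on $\Nman_i$, whose time-one flow $\dif_i$ is leaf-preserving, equals $\id$ near $\partial\Nman_i$, and restricts to $\beta|_{\Cman_i}$ on $\Cman_i$ since $\chi_i\equiv1$ there. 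Setting $\sigma(\beta)=\dif_i$ on each $\Nman_i$ and $\sigma(\beta)=\id$ on the complement defines a leaf-preserving diffeomorphism fixed on $\Xman$, depending continuously on $\beta$, with $\sigma(\id_{\Sigma})=\id_{\Mman}$ and $\rho(\sigma(\beta))=\beta$. Surjectivity of $\rho_0$ follows from the same lifting applied to entire paths: given $(\beta_i)\in\prod_i\DiffId(\Cman_i)$, choose paths from $\id_{\Cman_i}$ to $\beta_i$, lift their generating fields horizontally with cut-offs, and integrate to obtain an isotopy $\dif^{\,t}\in\Diff(\Fol,\Xman)$ from $\id_{\Mman}$ with $\dif^{\,1}|_{\Cman_i}=\beta_i$, so $\dif^{\,1}\in\DiffId(\Fol,\Xman)$ attains $(\beta_i)$.

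I expect the main obstacle to be the simultaneous control required in the cut-off step: the damping $\chi_i$ must keep the lifted isotopy leaf-preserving (handled by horizontality being preserved under scaling), restrict correctly to $\beta|_{\Cman_i}$ on the zero section (forcing $\chi_i\equiv1$ near $\Cman_i$), and taper to the identity near $\partial\Nman_i$ so the pieces glue across $\Mman\setminus\Sigma$, all while depending continuously on $\beta$. The existence of a horizontal distribution lying in $\ker d\func_i$ and projecting isomorphically onto $T\Cman_i$ is the technical input that makes this possible and is exactly the content of \S\ref{sect:fiber_wise_Morse}; granting it, the residual difficulty is the continuity and gluing, which parallel the Cerf--Palais/Lima argument underlying Theorem~\ref{th:loc_triv_fibr} and Lemma~\ref{lm:sect_at_id}, now carried out with a $\func$-preserving connection so as to land inside $\Diff(\Fol,\Xman)$.
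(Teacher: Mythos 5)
Your proposal is correct and is essentially the paper's own argument: the paper likewise reduces everything to a continuous local section at the identity via the translation trick for homomorphisms of topological groups (Lemma~\ref{lm:sect_hom}), takes as its geometric input the leaf-preserving local Ehresmann connection over a neighborhood of each $\Cman_i$ (Corollary~\ref{cor:bott_func_has_conn} together with the convexity Lemma~\ref{lm:Ehresmann_conn_convex}), builds per-leaf sections supported in mutually disjoint regular neighborhoods, and glues them (Lemma~\ref{lm:sect_prop} inside the proof of Theorem~\ref{th:Zfol:ltf}). The only deviation is implementational: the paper, following Lima (Lemma~\ref{lm:sect_at_id}), horizontally lifts a damped straight-line homotopy on the base by parallel transport, with completeness of lifts handled by Lemma~\ref{lm:existence_complete_part_conn}, whereas you lift the generating time-dependent vector field of an exponential-map isotopy and damp it by a cutoff before integrating --- an equivalent mechanism, whose compactly supported field moreover makes the flow complete for free.
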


This theorem is a particular case of Theorem~\ref{th:Zfol:ltf} which also implies Theorems~\ref{th:MB_func:ltf} and~\ref{th:func_ltf}.

\section{Ehresmann connections}\label{sect:ehresmann_conn}
Let $p:\Eman\to\CCman$ be a smooth fibre bundle over a smooth connected manifold $\CCman$.
For $x\in\CCman$ we will denote by $\Eman_x = p^{-1}(x)$ the fibre over $x$.
Let
\[
\VE = \mathop{\cup}\limits_{(x,v)\in\Eman} T_{(x,v)}\Eman_x
\]
be the union of all tangent planes to the fibres.
The projection $q:\VE \to \Eman$ is called the \myemph{vertical subbundle} of $T\Eman$.

More generally, for an open subset $\aSet \subset \Eman$ let
\[
\VU=q^{-1}(\aSet) = T\aSet \cap \VE
\]
be the union of tangent spaces to fibres at points of $\aSet$.
Then a \myemph{local Ehresmann connection over $\aSet$} is a smooth vector bundle projection $\conn:T\aSet\to \VU$
over the identity map of $\aSet$ i.e.\ it makes commutative the following diagram:
\[
\xymatrix{
\VU \ \ar[rd]_{q} \ar@{^(->}[r]^{i} \ar@/^2pc/[rr]^{\id_{\VU}} & \ T\aSet \ \ar[r]^{\conn} \ar[d]_{\pi} & \ \VU \ar[ld]^{q} \\
& \aSet
}
\]
Thus for each $(x,v)\in\aSet$ the restriction of $\conn$ to the tangent space $T_{(x,v)}\aSet$ induces a linear projection $\conn_{(x,v)}:T_{(x,v)}\aSet \to T_{(x,v)} E_{x}$.

A local Ehresmann connection over $\Eman$ is called an \myemph{Ehresmann connection}.

The kernel $\ker(\conn)$ of $\conn$ is the subbundle of $T\aSet$ called \myemph{horizontal}.
Then we have a direct sum splitting $T\aSet = \HU \oplus \VU$.

If $\Nman$ is a smooth manifold, then a smooth map $\phi:\Nman\to\aSet$ is called \myemph{horizontal} with respect to $\conn$, or simply \myemph{$\conn$-horizontal}, if the image of the tangent bundle $T\Nman$ under the tangent map $T\phi:T\Nman\to T\aSet$ is contained in the horizontal subundle $\HU$, i.e.
\[T\phi(T\Nman) \subset \HU.\]

It is well known, e.g.~\cite[Theorem~9.8]{KolarMichorSlovak:NODG:1993}, that $\conn$ induces the so-called \myemph{parallel transport}.
Namely, if $\gamma:[0,1]\to\CCman$ is a smooth path and $z\in \Eman_{\gamma(0)} \cap \aSet$, then there exists an $\eps\in(0,1]$ and a unique smooth path
$\lift{\gamma}{z}:[0,\eps] \to \aSet$
such that
\begin{enumerate}[label={\rm(\roman*)}]
\item\label{enum:lift:a} $\lift{\gamma}{z}(0) = z$;
\item\label{enum:lift:b} $p\circ \lift{\gamma}{z} = \gamma:[0,\eps] \to \CCman$;
\item\label{enum:lift:c} $\lift{\gamma}{z}$ is $\conn$-horizontal.
\end{enumerate}
This path $\lift{\gamma}{z}$ is called a \myemph{horizontal lift} of $\gamma$ at $z$.
Moreover, $\lift{\gamma}{z}$ smoothly depends on $z$ and $\gamma$ in the sense described in Lemma~\ref{lm:local_homotopy_ext} below.
If $\aSet=\Eman$ and one can choose $\eps=1$ for any such path $\gamma$ and $z\in\Eman_{\gamma(0)}$, then the connection $\conn$ is called \myemph{complete}.

Let $\bSet \subset \aSet$ be an open subset and $\alpha = \{\Aman_i\}_{i\in\Lambda}$ be an open cover of $\CCman$.
We will say that a local Ehresmann connection $\conn:T\aSet\to\VU$ over $\aSet$ is \myemph{complete with respect to $(\bSet,\alpha)$} if for each smooth path $\gamma:[0,1]\to\CCman$ whose image is contained in some element of $\Aman_i$ of $\alpha$ and each $z\in \Eman_{\gamma(0)} \cap \bSet$ there exists a horizontal lift $\lift{\gamma}{z}:[0,1]\to \aSet$ satisfying the above conditions~\ref{enum:lift:a}-\ref{enum:lift:c}.

Thus a complete Ehresmann connection is a local Ehresmann connection over $\Eman$ being complete with respect to the pair $(\Eman, \{\CCman\})$.

\begin{lemma}\label{lm:linear_retr}
Let $E$ be a linear space over $\bR$, $V\subset E$ be a subspace, and $p:E\to E$ be a linear self-map with $p(E)=V$.
Then the following conditions are equivalent:
\begin{enumerate}[label={\rm(\roman*)}]
\item\label{enum:p_proj} $p$ is a \myemph{projection} onto its image $V$, that is $p^2=p$;
\item\label{enum:p_retr} $p$ is a \myemph{retraction} onto $V$, that is $p(v)=v$ for all $v\in V$.
\end{enumerate}

Moreover, let $p_0,p_1:E\to V$ be two projections onto $V$, and $p_t:E \to V$, $t\in[0,1]$, be given by $p_t = (1-t)p_0 + t p_1$.
Then
\begin{enumerate}[label={\rm(\alph*)}]
\item\label{lm:lin_retr:a} each $p_t$ is a projection onto $V$ as well;
\item\label{lm:lin_retr:b} $\ker p_t$ is contained in a convex hull of $\ker p_0$ and $\ker p_1$.
\end{enumerate}
\end{lemma}
\begin{proof}
The equivalence of conditions \ref{enum:p_proj} and \ref{enum:p_retr} is evident: each of them requires that $p(p(e))=p(e)$ for all $e\in E$.

\ref{lm:lin_retr:a}
If $v\in V$, then $p_t(v) = (1-t)p_0(v) + t p_1(v)=v$, and so $p_t$ is a linear retraction, and hence a projection, onto $V$.

\ref{lm:lin_retr:b}
Let $a\in \ker p_t$, so $(1-t)p_0(a) + t p_1(a)=0$.
Then $a_i = a-p_i(a) \in \ker p_i$ for $i=0,1$.
Indeed, as $p_i$ is a retraction, $p_i(a_i) = p_i(a)- p_i(p_i(a_i))= p_i(a)- p_i(a_i)=0$.

Hence,
\begin{align*}
    (1-t)a_0 + ta_1 &=
    (1-t)(a-p_0(a)) + t(a-p_1(a)) = \\
    & =(1-t) a + t a  +     (1-t) p_0(a) + tp_1(a) = a,
\end{align*}
that is $a$ belongs to the convex hull of $\ker p_0$ and $\ker p_1$.
\end{proof}



As a consequence of this lemma we get the following statement.
\begin{lemma}\label{lm:Ehresmann_conn_convex}
{\rm cf. \cite[Exercise 12(iv)]{GreubHalperinVanstone:I:1972}.}
The set of all Ehresmann connections $T\aSet \to \VU$ over an open subset $\aSet$ of $\Eman$ is convex in the linear space of all bundle morphisms of $T\aSet \to T\Eman$ over the identity inclusion $\aSet\subset \Eman$.

Moreover, let $\mathcal{L}= \mathop{\cup}\limits_{z\in \aSet} \mathcal{L}_z$ be a distribution on $\aSet$, i.e.\! a correspondence associating to each $z\in\aSet$ some linear subspace $\mathcal{L}_z$ of the tangent space $T_{z}\aSet$, (the dimensions of $\mathcal{L}_z$ may vary with $z$, and we do not assume any kind of continuity of the correspondence $z\mapsto \mathcal{L}_z$).
Denote by $\Gamma_{\aSet}(\mathcal{L})$ the set of all local Ehresmann connections over $\aSet$ whose horizontal bundles are contained in $\mathcal{L}$.
Then $\Gamma_{\aSet}(\mathcal{L})$ is convex as well.
\end{lemma}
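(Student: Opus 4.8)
The plan is to reduce the whole statement to a pointwise fact of linear algebra, exploiting that each condition defining an Ehresmann connection is affine on every fibre.

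First I would reinterpret a local Ehresmann connection $\conn$ over $\aSet$ as a smooth section of the bundle of endomorphisms of $T\aSet$ (identifying $T\Eman|_{\aSet}$ with $T\aSet$, as $\aSet$ is open in $\Eman$) which over each point $z\in\aSet$ is a linear projection of $T_z\aSet$ onto the fixed vertical subspace $\VU_z$; concretely, $\conn(T_z\aSet)\subseteq\VU_z$ together with $\conn|_{\VU_z}=\id$. These two requirements automatically force idempotency $\conn^2=\conn$ on each fibre, so the image is exactly $\VU_z$, and the inclusion of the set of connections into the named linear space of bundle morphisms is obtained by composing with $\VU\hookrightarrow T\Eman$.

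Given two connections $\conn_0,\conn_1$ and $t\in[0,1]$, I would then set $\conn_t=(1-t)\conn_0+t\conn_1$, which is smooth as a constant-coefficient combination of smooth sections, and verify the two defining conditions fibrewise. The image condition holds because $\VU_z$ is a linear subspace, hence closed under the combination of $\conn_0(v),\conn_1(v)\in\VU_z$; the restriction condition holds because for $w\in\VU_z$ one has $\conn_0(w)=\conn_1(w)=w$, whence $\conn_t(w)=w$. This already yields the first assertion.

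For the refined claim about $\Gamma_{\aSet}(\mathcal{L})$ the extra requirement is that the horizontal bundle $\HU=\ker(\conn)$ lie in the distribution $\mathcal{L}$. Here the key observation is that for a projection the kernel equals the image of $\id-\conn$, so that a horizontal vector of $\conn_t$ has the form $v-\conn_t(v)$, and one has the affine identity
\[
v-\conn_t(v)=(1-t)\bigl(v-\conn_0(v)\bigr)+t\bigl(v-\conn_1(v)\bigr),
\]
which presents it as a linear combination of horizontal vectors of $\conn_0$ and $\conn_1$. Each summand lies in the linear subspace $\mathcal{L}_z$ by hypothesis, hence so does the combination, giving $\ker(\conn_t)\subseteq\mathcal{L}$. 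I do not expect a genuine obstacle: the only content is that ``projection onto the fixed subspace $\VU$'' and ``kernel inside the fixed subspace $\mathcal{L}$'' are preserved under affine combinations at each point, with smoothness automatic; the one thing worth recording carefully is that the horizontal part depends affinely on $\conn$ through $\id-\conn$, which is exactly what the displayed identity encodes.
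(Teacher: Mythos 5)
Your proof is correct. The paper in fact gives no written argument for this lemma (it is stated as evident, with only a reference to an exercise in Greub--Halperin--Vanstone), and your pointwise affine reasoning --- a convex combination of projections onto the fixed subspace $\VU_z$ is again such a projection, and its kernel lies in $\mathcal{L}_z$ because of the identity $v-\conn_t(v)=(1-t)\bigl(v-\conn_0(v)\bigr)+t\bigl(v-\conn_1(v)\bigr)$ --- is precisely the standard verification the authors take for granted.
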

\begin{proof}
Let $\conn^0,\conn^1: T\aSet \to \VU$ be two local Ehresmann connections over some open subset $\aSet \subset \Eman$, so for each $(x,v)\in\aSet$ their restrictions to the tangent space $T_{(x,v)}\aSet$ are well-defined projections $\conn^0_{(x,v)},\conn^1_{(x,v)}: T_{(x,v)}\aSet \to  T_{(x,v)}\Eman_x.$ 
Then for each $t\in[0,1]$ we have a well-defined bundle morphism 
\[ \conn^t = (1-t)\conn^0 + t\conn^1:T\aSet \to \VU,\] 
inducing for each $(x,v)\in\aSet$ a linear map $\conn^t_{(x,v)}:T_{(x,v)}\aSet \to  T_{(x,v)}\Eman_x$.
Since 
\[\conn^t_{(x,v)}=(1-t)\conn^0_{(x,v)} + t\conn^1_{(x,v)}\] is a convex linear combination of linear projections $\conn^0_{(x,v)}$ and $\conn^1_{(x,v)}$, it follows from~\ref{lm:lin_retr:a} of Lemma~\ref{lm:linear_retr} that $\conn^t_{(x,v)}$ is a linear projection as well.
Thus $\conn^t$ is a local Ehresmann connection over $\aSet$, and thus the set of all Ehresmann connections $T\aSet \to \VU$ over $\aSet$ is convex.

Moreover, let $\mathcal{L}$ be a distribution on $\aSet$ such that $\HBundle{\conn_0}, \HBundle{\conn_1} \subset \mathcal{L}$.
In other words, for each $(x,v)\in \aSet$, the kernels of the linear projections $\conn^0_{(x,v)}$ and $\conn^1_{(x,v)}$ are contained in the corresponding plane $\mathcal{L}_{(x,v)} \subset T_{(x,v)} \aSet$.
Then by~\ref{lm:lin_retr:b} of Lemma~\ref{lm:linear_retr} the kernel of $\conn^t_{(x,v)}$ is contained in the convex hull of kernels of $\conn^0_{(x,v)}$ and $\conn^1_{(x,v)}$, and therefore it must also be contained in $\mathcal{L}_{(x,v)}$.
Thus $\conn^t \in \Gamma_{\aSet}(\mathcal{L})$ for all $t\in[0,1]$ and so $\Gamma_{\aSet}(\mathcal{L})$ is convex.
\end{proof}
Using a partition of unity technique one easily deduces from Lemma~\ref{lm:Ehresmann_conn_convex} that every smooth fibre bundle $p:\Eman\to\CCman$ always admits an Ehresmann connection.
However, as it is mentioned in~\cite{Hoyo:IndagMath:2016} the subspace of \myemph{complete} Ehresmann connections is not necessarily convex.
Hence gluing complete Ehresmann connections defined in local charts gives an Ehresmann connection which is not necessarily complete.
Nevertheless, it is shown in~\cite{Hoyo:IndagMath:2016} that every smooth fibre bundle admits a complete Ehresmann connection.
For our purposes it is enough to use only local Ehresmann connections being complete with respect to some pair $(\bSet,\alpha)$, see the last paragraph of the proof of Theorem~\ref{th:loc_triv_fibr}, and we will not use the result of \cite{Hoyo:IndagMath:2016}.

\begin{lemma}[Homotopy lifting property]\label{lm:local_homotopy_ext}{\rm cf. \cite[Exercise 13(i)]{GreubHalperinVanstone:I:1972}}.
Let $\conn:T\aSet \to \VU$ be a local Ehresmann connection complete with respect to an open set $\Zman \subset \aSet$ and an open cover $\alpha=\{\Aman_i\}_{i\in\Lambda}$ of $\CCman$.
Let also $\Wman$ be a smooth manifold and $\aHmt: \Wman\times [0,1]\to \CCman$ be a smooth homotopy.
Suppose there exists a smooth map $\bHmt_0:\Wman\times 0 \to \bSet$ such that $p \circ \bHmt_0 = \aHmt_0$.
Then there exists an open neighborhood $\Gman$ of $\Wman\times 0$ in $\Wman\times [0,1]$ and a unique smooth map $\bHmt: \Gman \to \aSet$ having the following properties:
\begin{enumerate}[label={\rm(\alph*)}]
\item\label{enum:lm:local_homotopy_ext:H0_id} $\bHmt(w,0) = \bHmt_0(w)$ for all $w\in\Gman$;
\item\label{enum:lm:local_homotopy_ext:phH_H} $p\circ \bHmt = \aHmt|_{\Gman}$;
\item\label{enum:lm:local_homotopy_ext:hor} if $w\times[0,\eps] \subset \Gman$ for some $w\in\Gman$ and $\eps>0$, then the path $\bHmt_{w}: [0,\eps]\to \Eman$ defined by $\bHmt_{w}(t) = \bHmt(w,t)$ is $\conn$-horizontal.
\end{enumerate}
In particular, we have the following commutative diagram:
\[
\xymatrix{
\ \Wman\times 0 \ \ar@{^(->}[d] \ar[r]^-{\bHmt_0} & \ \bSet \ \ar@{^(->}[r] & \ \aSet \ \ar@{^(->}[r] & \ E \ar[d]^{p} \ \\
\ \Gman \ \ar@{^(->}[r] \ar@{-->}[rru]^-{\bHmt} & \ \Wman\times [0,1] \ \ar[rr]^-{\aHmt}          & & \ \CCman \
}
\]
If $\conn$ is a complete Ehresmann connection then one can take $\Gman = \Wman\times[0,1]$.
\end{lemma}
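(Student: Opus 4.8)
The plan is to realise the horizontal lift $\bHmt$ as the flow of a single smooth vector field on an auxiliary manifold, so that all of existence, uniqueness, smoothness and the three listed properties reduce to the standard theorem on flows of smooth vector fields and their smooth dependence on initial data.

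First I would form the pullback bundle. Since $p:\aSet\to\CCman$ is the restriction of the bundle projection to the open set $\aSet$, hence a submersion onto an open subset of $\CCman$, and $\aHmt:\Wman\times[0,1]\to\CCman$ is smooth, the fibre product
\[
\aHmt^{*}\aSet \;=\; \bigl\{\, \bigl((w,t),e\bigr)\in(\Wman\times[0,1])\times\aSet \ \bigm|\ \aHmt(w,t)=p(e)\,\bigr\}
\]
is a smooth manifold (with boundary arising from $\{0,1\}\subset[0,1]$), and the natural projection $\pi:\aHmt^{*}\aSet\to\Wman\times[0,1]$ is a fibre bundle. Pulling $\conn$ back along the canonical bundle map $\widehat{\aHmt}:\aHmt^{*}\aSet\to\aSet$ yields a local Ehresmann connection $\aHmt^{*}\conn$ whose horizontal subspaces are exactly the preimages of those of $\conn$, so that $\widehat{\aHmt}$ carries $\aHmt^{*}\conn$-horizontal curves to $\conn$-horizontal curves and covers $\aHmt$ on the base.

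The key construction is then to take the coordinate vector field $\partial_t$ on $\Wman\times[0,1]$ and let $\widetilde{\partial_t}$ be its horizontal lift with respect to $\aHmt^{*}\conn$; this is a genuine smooth vector field on the total space $\aHmt^{*}\aSet$ whose $\Wman$-component vanishes and which at $t=0$ points into the interior, so its forward flow $\Phi_s$ is defined on an open neighbourhood of the zero-time slice and keeps $w$ fixed while increasing $t$. Using $\bHmt_0$ to build the smooth section $w\mapsto\bigl((w,0),\bHmt_0(w)\bigr)$ of $\pi$ over $\Wman\times 0$ (legitimate precisely because $p\circ\bHmt_0=\aHmt_0$), I would set
\[
\bHmt(w,t)\;=\;\widehat{\aHmt}\bigl(\Phi_t\bigl((w,0),\bHmt_0(w)\bigr)\bigr),
\]
with $\Gman$ the open set of pairs $(w,t)$ for which this flow is defined up to time $t$. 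Property~\ref{enum:lm:local_homotopy_ext:H0_id} holds since $\Phi_0=\id$; property~\ref{enum:lm:local_homotopy_ext:phH_H} holds since $\widetilde{\partial_t}$ covers $\partial_t$, so $\Phi_s$ covers $(w,0)\mapsto(w,s)$ and $\pi$ records the value $\aHmt(w,s)$; and property~\ref{enum:lm:local_homotopy_ext:hor} holds because integral curves of $\widetilde{\partial_t}$ are $\aHmt^{*}\conn$-horizontal and $\widehat{\aHmt}$ preserves horizontality. Smoothness of $\bHmt$ together with openness of $\Gman$ is exactly smoothness of a flow on its open domain, and uniqueness follows from uniqueness of integral curves: any lift satisfying \ref{enum:lm:local_homotopy_ext:H0_id}--\ref{enum:lm:local_homotopy_ext:hor} restricts on each segment $\{w\}\times[0,t]\subset\Gman$ to the horizontal lift of $s\mapsto\aHmt(w,s)$ starting at $\bHmt_0(w)$, which is forced.

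For the completeness upgrade I would note that the sole way for $(w,t)$ to fail to lie in $\Gman$ is for the integral curve of $\widetilde{\partial_t}$ to leave $\aHmt^{*}\aSet$ before time $t$, i.e.\ for the horizontal lift in $\aSet$ to escape; completeness of $\conn$ forbids this for base paths defined on $[0,1]$, giving $\Gman=\Wman\times[0,1]$. The main obstacle I anticipate is the bookkeeping near the boundary of $[0,1]$ and the verification that $\widehat{\aHmt}$ genuinely transports horizontality, i.e.\ that the pullback connection is set up correctly. The completeness hypothesis with respect to $(\bSet,\alpha)$ enters only to keep $\Gman$ from collapsing: while the base path $s\mapsto\aHmt(w,s)$ remains inside a single member of $\alpha$, which for $s$ near $0$ it does since it starts in $\bSet$, that hypothesis guarantees the lift stays in $\aSet$, so $\Gman$ is a genuine neighbourhood of $\Wman\times 0$. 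Everything else reduces to the cited facts on ordinary differential equations.
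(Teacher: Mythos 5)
Your proposal is correct, and it reaches the conclusion by a genuinely different route than the paper. The paper argues pointwise: for each $z\in\Wman$ it takes the base path $\gamma_z(t)=\aHmt(z,t)$, lifts it horizontally through $\bHmt_0(z)$, and then appeals to smooth dependence of ODE solutions on initial data and parameters to obtain joint smoothness of $(z,t)\mapsto\lift{\gamma}{z}(t)$ and openness of the domain (explicitly leaving these details to the reader). You instead globalize before integrating: you pull the bundle and the connection back along $\aHmt$, lift the single vector field $\partial_t$ horizontally to $\widetilde{\partial_t}$, and define $\bHmt$ as the flow of $\widetilde{\partial_t}$ pushed forward by the canonical map $\widehat{\aHmt}$. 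What this buys is that exactly the points the paper waves at --- openness of $\Gman$, joint smoothness, uniqueness, and the completeness upgrade $\Gman=\Wman\times[0,1]$ --- become literal citations of the standard theorem on flow domains of a smooth vector field; what the paper's version buys is a more elementary argument with no auxiliary total space, at the price of an unproved parametric-ODE step. Two caveats on your write-up. First, $\pi:\aHmt^{*}\aSet\to\Wman\times[0,1]$ is in general only a submersion, not a fibre bundle, since $p|_{\aSet}$ need not be locally trivial ($\aSet$ is merely an open neighbourhood); this is harmless because your construction uses only the submersion structure and the pullback horizontal distribution, but the claim should be weakened accordingly. Second, your closing remark about the role of completeness with respect to $(\bSet,\alpha)$ is slightly off: $\Gman$ is a neighbourhood of $\Wman\times 0$ by short-time existence of integral curves alone, with no completeness hypothesis at all; what that hypothesis actually provides --- and this is how the lemma is later used in the proof of Lemma~\ref{lm:sect_at_id}, where the lift must be defined on all of $\bSet\times[0,1]$ --- is that $\{w\}\times[0,1]\subset\Gman$ whenever the base path $\aHmt(w,\cdot)$ stays inside a single element $\Aman_i$ of $\alpha$, since then the horizontal lift starting at $\bHmt_0(w)\in\bSet$ exists on the whole interval. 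Your flow-theoretic framework yields this strengthening in one line (the curve $s\mapsto\bigl((w,s),\lift{\gamma}{\bHmt_0(w)}(s)\bigr)$ is an integral curve of $\widetilde{\partial_t}$, so the flow exists as long as the lift does), so the fix is a sentence, not a restructuring.
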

\begin{proof}
Conditions~\ref{enum:lm:local_homotopy_ext:H0_id}-\ref{enum:lm:local_homotopy_ext:hor} uniquely determine the map $\bHmt$.
Indeed, let $z=(x,v)\in\Wman$ and $\gamma_{z}:[0,1]\to\CCman$ be given by $\gamma_z(t) = \aHmt(z,t)$.
Let $I_z \subset [0,1]$ be a maximal subinterval such that $\gamma_{z}(I_z)$ is contained in some element $\Aman_i$ of $\alpha$ and on which a unique horizontal lift $\lift{\gamma}{z}:I_z \to \aSet$ of $\gamma$ is defined.
Put $\Gman' = \mathop{\cup}\limits_{z\in\Wman} z\times I_z$ and define the following map $\bHmt:\Gman' \to \aSet$ by $\bHmt(z,t) = \lift{\gamma}{z}(t)$.

Notice that $\lift{\gamma}{z}$ is a solution of a certain ODE whose coefficients are smooth functions of derivatives of $\gamma$.
This implies existence and uniqueness of the lift $\lift{\gamma}{z}$, its smooth dependence on the initial point $z$, and also that $\Gman'$ contains an open neighborhood $\Gman$ of $\Wman\times 0$.
Then the restriction $\bHmt|_{\Gman}: \Gman \to \aSet$ is the required map.
We leave the details for the reader.
\end{proof}

\section{Local triviality of restriction maps to singluar leaves}\label{sect:loc_triv_emb}
Let $p:\Eman\to\CCman$ be a smooth $k$-dimensional vector bundle over a smooth manifold $\CCman$.
We will identify $\CCman$ with its image in $\Eman$ under the zero section.

Let $\Cf{\Eman}{\CCman}$ be the subset of $\Ci{\Eman}{\Eman}$ consisting maps $\dif:\Eman\to\Eman$ such that $\dif(\CCman)=\CCman$.
For an open $\aSet\subset \Eman$ denote by $\CfU{\aSet}{\Eman}{\CCman}$ the subset of $\Cf{\Eman}{\CCman}$ consisting of maps fixed on $\Eman\setminus\aSet$.
Let also $\Diff(\Eman)$ be the group of $\Cinfty$ diffeomorphisms of $\Eman$ and
\begin{align*}
\Df{\Eman}{\CCman} &= \Cf{\Eman}{\CCman} \cap \Diff(\Eman), &
\DfU{\aSet}{\Eman}{\CCman} &= \CfU{\aSet}{\Eman}{\CCman} \cap \Diff(\Eman).
\end{align*}
We endow all these spaces with the corresponding strong Whitney $\Cinfty$ topologies.
Since $\Diff(\Eman)$ is open in $\Ci{\Eman}{\Eman}$, it follows that $\Df{\Eman}{\CCman}$ is open in $\Cf{\Eman}{\CCman}$ and $\DfU{\aSet}{\Eman}{\CCman}$ is open in $\CfU{\aSet}{\Eman}{\CCman}$.

Notice that there is a natural ``restriction to $\CCman$ homomorphism'':
\[
\rho: \Df{\Eman}{\CCman} \to \Diff(\CCman),
\qquad
\rho(\dif) = \dif|_{\CCman}.
\]
It is well known that $\rho$ is a locally trivial fibration for compact $\CCman$, \cite{Palais:CMH:1960}, \cite{Cerf:BSMF:1961}, \cite{Lima:CMH:1964}.
We present now a slight modification of the proof from E.~Lima~\cite{Lima:CMH:1964} which uses local Ehresmann connections and hold for non-compact $\CCman$ as well.
This will allow us to establish also a local triviality of restrictions to singular leaves for certain groups of leaf preserving diffeomorphisms, see Theorem~\ref{th:Zfol:ltf}.

\begin{remark}\rm
Let $p:\Eman\to\CCman$ be a smooth fibre bundle.
Chapter~3 of \cite{HongKalliongisMcCulloughRubinstein:LMN:2012} extends Cerf-Palais result about local triviality for embeddings ``in a vertical direction'', that is for submanifolds $\Xman\subset\Eman$ being unions of fibres and diffeomorphisms preserving fibres of vertical subbundle.
\end{remark}

\begin{theorem}\label{th:loc_triv_fibr}{\rm cf.~\cite{Palais:CMH:1960}, \cite{Cerf:BSMF:1961}, \cite{Lima:CMH:1964}.}
Let $\CCman$ be a closed manifold, $p:\Eman\to\CCman$ be a smooth vector bundle over $\CCman$, and $\aSet$ be an open neighborhood of $\CCman$ in $\Eman$ with compact closure.
Then for every subgroup $\mathcal{A} \subset \Df{\Eman}{\CCman}$ containing $\DfU{\aSet}{\Eman}{\CCman}$ the ``restriction to $\CCman$ homomorphism'':
\begin{equation}\label{equ:restr_A_DfM}
\rho: \mathcal{A} \to \Diff(\CCman),
\qquad\qquad
\rho(\dif) = \dif|_{\CCman}
\end{equation}
is a locally trivial fibration over its image.
\end{theorem}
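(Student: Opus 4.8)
The plan is to reduce everything to the construction of a single continuous local section of $\rho$ near $\id_{\CCman}$ and then to spread it around using the group structure. Since $\rho$ is a homomorphism of topological groups, a local section near an arbitrary point $\beta_0\in\rho(\mathcal{A})$ is obtained from one near $\id_{\CCman}$: writing $\beta_0=\rho(\dif_0)$ with $\dif_0\in\mathcal{A}$ and setting $\tilde\msect(\beta)=\msect(\beta\circ\beta_0^{-1})\circ\dif_0$ for $\beta$ close to $\beta_0$. The existence of such local sections is in turn equivalent to local triviality of $\rho$ over its image: over a neighborhood $U$ of $\beta_0$ the map $\dif\mapsto\bigl(\rho(\dif),\,\tilde\msect(\rho(\dif))^{-1}\circ\dif\bigr)$ is a homeomorphism $\rho^{-1}(U)\cong U\times\ker(\rho)$, with inverse $(\beta,g)\mapsto\tilde\msect(\beta)\circ g$. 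Hence the whole theorem rests on the following claim (Lemma~\ref{lm:sect_at_id}): for every $\beta\in\Diff(\CCman)$ sufficiently $\Cinfty$-close to $\id_{\CCman}$ one can produce $\msect(\beta)\in\DfU{\Vman}{\Eman}{\CCman}$ with $\msect(\beta)|_{\CCman}=\beta$, depending continuously on $\beta$ and with $\msect(\id_{\CCman})=\id_{\Eman}$.

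To build this section I would fix a Riemannian metric on $\CCman$, a fibre metric on $\Eman$, and, using standard convexity arguments as in Lemma~\ref{lm:Ehresmann_conn_convex} together with a partition of unity, a \emph{linear} Ehresmann connection $\conn$ on $\Eman$ which is \emph{metric} (so that parallel transport preserves fibre norms) and which is complete with respect to a pair $(\bSet,\alpha)$, where $\alpha$ is a cover of $\CCman$ by convex normal balls and $\bSet=\{\|e\|<r_1\}$ is a tube with $\overline{\bSet}\subset\Vman$. For $\beta$ close to $\id_{\CCman}$, each $x$ and its image $\beta(x)$ are joined by a unique short geodesic $\gamma^\beta_x$, giving a smooth isotopy $\aHmt(x,t)$ from $\id_{\CCman}$ to $\beta$ whose tracks lie in single elements of $\alpha$. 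Choosing a cutoff $\mu\colon\Eman\to[0,1]$ depending only on the fibre norm, equal to $1$ on $\CCman$ and to $0$ outside $\bSet$, define
\[
\msect(\beta)(e)=\lift{\gamma^\beta_{p(e)}}{e}\bigl(\mu(e)\bigr),
\]
the $\mu(e)$-time value of the horizontal lift furnished by Lemma~\ref{lm:local_homotopy_ext}. Because $\conn$ is linear the horizontal lift starting at the zero vector stays on the zero section, so $\msect(\beta)|_{\CCman}=\beta$; because it is metric fibre norms are preserved, so $\msect(\beta)$ is supported in $\overline{\bSet}\subset\Vman$ and equals $\id_{\Eman}$ off $\Vman$; hence $\msect(\beta)\in\DfU{\Vman}{\Eman}{\CCman}\subset\mathcal{A}$. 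At $\beta=\id_{\CCman}$ the path $\gamma^\beta_x$ is constant and $\msect(\id_{\CCman})=\id_{\Eman}$, so by smooth dependence on $\beta$ and openness of the diffeomorphism condition on the compact region $\overline{\Vman}$, the map $\msect(\beta)$ is a diffeomorphism for $\beta$ near $\id_{\CCman}$, and $\beta\mapsto\msect(\beta)$ is continuous in the strong $\Cinfty$ topologies.

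The remaining assertions follow formally. Local sections make $\rho$ an open map onto its image and yield path lifting, so $\rho(\mathcal{A})$ is a union of path components of $\Diff(\CCman)$, exactly as recorded in \S\ref{sect:prelim}. If $\DiffId(\CCman)$ is contractible, then every path component of $\rho(\mathcal{A})$, being a coset of $\DiffId(\CCman)$ inside $\Diff(\CCman)$, is contractible; a locally trivial fibration over a base all of whose connected components are contractible admits a global section and is therefore trivial, which gives the homeomorphism $\mathcal{A}\cong\ker(\rho)\times\rho(\mathcal{A})$.

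The main obstacle is the section at the identity, that is, verifying that the norm-damped parallel transport $\msect(\beta)$ is a genuine diffeomorphism supported in $\Vman$ and depends continuously on $\beta$ in the strong $\Cinfty$ topology. This is precisely where the choice of a \emph{metric linear} connection matters: linearity keeps the zero section invariant so that the section restricts to $\beta$ on $\CCman$, and the metric property localizes the support inside $\Vman$, while completeness with respect to $(\bSet,\alpha)$ guarantees that the horizontal lifts exist up to time $1$ along the short geodesic tracks.
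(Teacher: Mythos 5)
Your argument is correct and shares its overall skeleton with the paper's proof: both reduce the theorem to a Palais-type homomorphism lemma (a continuous local section of $\rho$ near $\id_{\CCman}$ forces local triviality over the image, with the image a union of path components --- your translated sections $\tilde\msect(\beta)=\msect(\beta\circ\beta_0^{-1})\circ\dif_0$ and trivializing maps are exactly those of Lemma~\ref{lm:sect_hom}), plus a construction of that section by horizontal transport of an Ehresmann connection, cut off so as to be supported in $\Vman$ (the paper's Lemma~\ref{lm:sect_at_id}, after Lima). The genuine difference is the mechanism of the section. The paper works with an \emph{arbitrary} connection satisfying $T\CCman\subset\HU$, proves a relative completeness lemma (Lemma~\ref{lm:existence_complete_part_conn}), and damps the \emph{base isotopy}: the cutoff sits inside $\aHmt^{\gdif}(x,v,t)=\pi\bigl(x+t\lambda(x,tv)(\gdif(x)-x)\bigr)$, so points far from $\CCman$ ride constant paths, and the lift is evaluated at time $1$. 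You instead require the connection to be \emph{linear and metric} and damp the \emph{evaluation time}, lifting the undamped geodesic track and stopping at time $\mu(e)$: linearity makes the zero section horizontal and lifts globally defined (so completeness is automatic and Lemma~\ref{lm:existence_complete_part_conn} is not needed), while the metric property confines transport to sphere bundles, yielding the support condition. This is a legitimate shortcut, and your closing argument for the product decomposition $\mathcal{A}\cong\ker(\rho)\times\rho(\mathcal{A})$ (path components of $\rho(\mathcal{A})$ are cosets of the contractible group $\DiffId(\CCman)$, and a fibration over a metrizable base with open contractible components is trivial) fills in a claim the paper states without proof. Two remarks. First, a small repair: a cutoff ``depending only on the fibre norm'' must be taken as a smooth function of $\|e\|^{2}$, since the norm itself is not smooth along the zero section. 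Second, and more consequential: the paper's insistence on arbitrary connections with prescribed horizontal distribution is not pedantry, because Lemma~\ref{lm:sect_at_id} is reused in Theorem~\ref{th:Zfol:ltf} for connections whose horizontal bundle must lie in the tangent distribution of a singular foliation (Definition~\ref{def:Z:fol}); such connections are in general neither linear nor metric, so your construction, while fully adequate for Theorem~\ref{th:loc_triv_fibr} itself, would not carry over to the foliated theorems that are the paper's main objective.
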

\begin{proof}
We need the following three lemmas.
The first one is well known, but for completeness we present a short proof.
\begin{sublemma}\label{lm:sect_hom}{\rm cf. \cite[Theorem~A]{Palais:CMH:1960}}
Let $\rho: A \to B$ be a continuous homomorphisms of topological groups with units $e_{A}$ and $e_{B}$ respectively.
Suppose there exists an open neighborhood $\Uman$ of $e_{B}$ in $B$ and a continuous map $\msect:\Uman \to A$ such that
\begin{enumerate}[itemsep=0.5ex, label={\rm(\alph*)}]
\item\label{enum:lm:sect_hom:sieB_eA}
$\msect(e_{B})=e_{A}$;
\item\label{enum:lm:sect_hom:risig_g}
$\rho\circ\msect(g) = g$ for all $g\in \Uman$.
\end{enumerate}
Then the following statements hold.
\begin{enumerate}[label={\rm\arabic*)}]
\item\label{enum:lm:sect_hom:x:1}
The induced surjective homomorphism $\rho:A \to \rho(A)$ is locally trivial fibration with fiber $\ker(\rho)$.

\item\label{enum:lm:sect_hom:x:2}
The image of $\rho$ consists of entire path components of $B$.
In particular, if $A_e$ and $B_e$ are the path components of the units in $A$ and $B$ respectively, then $\rho(A_e)=B_e$.

\item\label{enum:lm:sect_hom:x:3}
If $\msect(\Uman\cap B_e)\subset A_e$, then the restriction map $\rho|_{A_e}:A_e\to B_e$ is a locally trivial fibration with fibre $\ker(\rho)\cap A_e$.
\end{enumerate}
\end{sublemma}
\begin{proof}
\ref{enum:lm:sect_hom:x:1}
First we will check that $\rho$ is a locally trivial fibration.
Let $a\in A$ and $b=\rho(a) \in B$.
Then by~\ref{enum:lm:sect_hom:sieB_eA} the set $\Uman_{b} = b\Uman$ is an open neighborhood of $b$ in $B$.
Define the following map $\phi: \Uman_{b} \times \ker(\rho) \to \rho^{-1}(\Uman_{b})$ by
\[\phi(g,k) = a\, \xi(b^{-1}g)\, k.\]
Then $\phi$ is a homeomorphism making commutative the following diagram:
\[
\xymatrix{
\Uman_{b}\times\ker(\rho) \ar[rr]^-{\phi} \ar[rd] && \rho^{-1}(\Uman_{b}) \ar[ld]_-{\rho} \\
& \Uman_{b}
}
\]
and its inverse $\phi^{-1}: \rho^{-1}(\Uman_{b}) \to \Uman_{b} \times \ker(\rho)$ is given by
\[
\phi^{-1}(x) = ( \rho(x), \xi(b^{-1}\rho(x))^{-1} a^{-1} x ).
\]
Indeed, notice that
\[
\rho(\phi(g,k)) = \rho(a\, \xi(b^{-1}g)\, k) = \rho(a) \, \rho(\xi(b^{-1}g))\, \rho(k) = b b^{-1} g = g.
\]
Hence
\begin{align*}
\phi^{-1}\circ\phi(g,k) &=
\phi^{-1}(a\, \xi(b^{-1}g)\, k) =
\bigl( g, \xi(b^{-1} g)^{-1} a^{-1} \, a\, \xi(b^{-1}g)\, k \bigr) = (g,k), \\
\phi\circ\phi^{-1}(x) &=
\phi( \rho(x), \xi(b^{-1}\rho(x))^{-1} a^{-1} x ) =
a \xi( b^{-1}\rho(x) ) \xi(b^{-1}\rho(x))^{-1} a^{-1} x = x.
\end{align*}
Thus $\rho:A \to B$ is a locally trivial fibration.

\ref{enum:lm:sect_hom:x:2}
Let us prove that $\rho(A)$ consists of entire path components.
Let $b = \rho(a) \in \rho(A)$ and $\gamma:[0,1]\to B$ be a path such that $\gamma(0)=b$ and $\gamma(1)=b'$.
We will show that $b' = \rho(a')$ for some $a'\in A$.
Indeed, since $\rho$ is a locally trivial fibration (and thus has the path lifting property), $\gamma$ lifts to a path $\tilde{\gamma}:[0,1]\to A$ such that $\tilde{\gamma}(0)=a$ and $\rho\circ\tilde{\gamma} = \gamma$.
Denote $a'=\tilde{\gamma}(1)$.
Then $b' = \gamma(1) = \rho\circ\tilde{\gamma}(1) = \rho(a')$.

\ref{enum:lm:sect_hom:x:3}
Finally notice that $\Vman = \Uman\cap B_{e}$ is an open neighborhood of $e_{B}$ in $B_{e}$.
Hence if $\msect(\Uman\cap B_{e}) \subset A_e$, then by 1) the restriction $\rho|_{A_e}$ is a locally trivial fibration as well.
\end{proof}

\begin{sublemma}\label{lm:existence_complete_part_conn}
Let $p:\Eman\to\CCman$ be a smooth vector bundle over a not necessarily compact manifold $\CCman$, and $\conn:T\aSet \to \VU$ be a local Ehresmann connection over an open neighborhood $\aSet\subset \Eman$ of $\CCman$.
Suppose $T\CCman\subset \HU$.
Then for each open cover $\beta$ of $\CCman$ there exists an open neighborhood $\bSet \subset \aSet$ of $\CCman$ and an open cover $\alpha=\{\Aman_i\}$ of $\CCman$ such that
\begin{enumerate}[label={\rm(\roman*)}]
\item\label{enum:lm:existence_complete_part_conn:a} $\alpha$ is a refinement of $\beta$, that is each element of $\alpha$ is contained in some element of $\beta$;
\item\label{enum:lm:existence_complete_part_conn:b} the closures of $\Aman_i$ are compact and constitute a locally finite family;
\item\label{enum:lm:existence_complete_part_conn:c} $\conn$ is complete with respect to $(\bSet,\alpha)$.
\end{enumerate}
\end{sublemma}
\begin{proof}
Fix a complete Riemannian metric of $\Eman$ and for each $x\in\CCman$ and $\delta>0$ denote by $\ball{x}{\delta}$ the open geodesic ball of radius $\delta$ in the fibre $\Eman_{x}$ centered at its origin which is identified with the point $x$ itself.
More generally, for a subset $\Xman\subset \CCman$ and $\delta>0$ denote $\ball{\Xman}{\delta} = \mathop{\cup}\limits_{x\in\Xman}\ball{x}{\delta}$.

Let $\gamma:[0,1]\to\CCman \subset \Eman$ be a smooth path and $x = \gamma(0)$.
Since $x\in\CCman$ and $T\CCman\subset\HU$, it follows that the horizontal lift $\lift{\gamma}{x}$ of $\gamma$ coincides with $\gamma$.
Hence we get from continuous dependence of solutions of ODE on initial data that for each $x\in\CCman$ there exists an open neighborhood $\Aman_x\subset\CCman$ with compact closure $\overline{\Aman_x}$ and a number $\delta_x>0$ such that
\begin{itemize}
\item $\{ \Aman_x \}_{x\in\CCman}$ is a refinement of $\beta$;
\item for every smooth path $\gamma:[0,1]\to\Aman_x$ with $\gamma(0)=x$ and $z\in\ball{x}{\delta_x}$ there exists a horizontal lift $\lift{\gamma}{z}:[0,1]\to\aSet$.
\end{itemize}
As $\CCman$ is paracompact and has a countable base, one can find at most countalble locally finite refinement $\alpha=\{ \Aman_i \}_{i\in\Lambda \subseteq\bN}$ of $\{ \Aman_x \}_{x\in\CCman}$.
Then conditions~\ref{enum:lm:existence_complete_part_conn:a} and~\ref{enum:lm:existence_complete_part_conn:b} hold for $\alpha$.
Moreover, as each $\Aman_i \in \alpha$ intersects only finitely many other elements of $\alpha$, the following number is positive:
\[
\eta_i = \min\{  \delta_j \mid \Aman_j \cap \Aman_i \not=\varnothing, \ j\in\Lambda\} > 0.
\]
Put
\[
\bSet = \mathop{\cup}\limits_{i\in\Lambda} \ball{\Aman_i}{\eta_i}.
\]
Then one easily checks that $\conn$ is complete with respect to the pair $(\bSet,\alpha)$ as well.
\end{proof}

\begin{sublemma}\label{lm:sect_at_id}
{\rm\cite{Lima:CMH:1964}}
Let $p:\Eman\to\CCman$ be a smooth vector bundle over a closed manifold $\CCman$, and $\conn:T\aSet \to \VU$ be a local Ehresmann connection over an open neighborhood $\aSet\subset \Eman$ of $\CCman$ having compact closure and such that $T\CCman\subset \HU$.
Then there exists an open neighborhood $\VV$ of $\id_{\CCman}$ in $\Diff(\CCman)$ and a homotopy $\msect:\VV\times[0,1]\to \DfU{\aSet}{\Eman}{\CCman}$ such that
\begin{enumerate}[label={\rm(\alph*)}, topsep=1ex, itemsep=0.5ex]
\item\label{enum:lm:sect_at_id:F0_pt}
$\msect_0(\VV) = \id_{\Eman}$, that is $\msect_0(\gdif)(x,v) = (x,v)$ for all $\gdif\in\VV$;

\item\label{enum:lm:sect_at_id:loc_sect_adopted}
for each $\gdif\in\VV$ and $(x,v)\in\aSet$ the path $t\mapsto\msect_t(\gdif)(x,v)$ is horizontal;

\item\label{enum:lm:sect_at_id:xi_q__h}
$\msect_t(\id_{\CCman}) = \id_{\Eman}$ for all $t\in[0,1]$;

\item\label{enum:lm:sect_at_id:rho_xi__idU}
$\rho\circ\msect_1 = \id_{\VV}$, that is $\msect_1(\gdif)|_{\CCman} = \gdif$ for all $\gdif\in\VV$.
\end{enumerate}
In particular, $\msect_1$ is a section of the restriction map $\rho$.
\end{sublemma}
\begin{proof}
The proof follows the line of \cite{Lima:CMH:1964}.
By Lemma~\ref{lm:existence_complete_part_conn} there exists an open neighborhood $\bSet \subset\aSet$ of $\CCman$ and a finite (as $\CCman$ is compact) open cover $\alpha = \{\Aman_i\}_{i=1,\ldots,a}$ of $\CCman$ such that $\conn$ is complete with respect to $(\bSet, \alpha)$.

Fix an embedding of $\CCman$ into $\bR^n$ and let $\pi:\Tman\to\CCman$ be its tubular neighborhood.
Then there exists $\tau>0$ such that for every two points $x,y\in\CCman$ with $|y-x|<\tau(x)$ the segment $I_{x,y}$ between $x$ and $y$ is contained in $\Tman$, and its projection $\pi(I_{x,y})$ is contained in some element $\Aman_i$ of the cover $\alpha$.

Denote by $\WW$ the set of all $\gdif\in\Diff(\CCman)$ such that $|\gdif(x) - x| < \tau$ for each $x\in\CCman$.
Then $\WW$ is an open neighborhood of $\id_{\CCman}$ in $\Diff(\CCman)$.
We will construct a map
\[\msect:\WW \times[0,1] \to \CfU{\aSet}{\Eman}{\CCman}\]
satisfying~\ref{enum:lm:sect_at_id:F0_pt}-\ref{enum:lm:sect_at_id:rho_xi__idU}, and then choose a smaller neighborhood $\VV \subset\WW$ of $\id_{\CCman}$ such that
\[\msect(\VV\times[0,1]) \subset \DfU{\aSet}{\Eman}{\CCman}.\]

\medskip

Fix a $\Cinfty$ function $\lambda:\Eman\to[0,1]$ such that $\lambda=0$ on some neighborhood $\Nman$ of $\overline{\Eman\setminus\Zman}$ and $\lambda=1$ near $\CCman$.
For each $\gdif\in\WW$ consider the following map:
\begin{align*}
&\aHmt^{\gdif}:\Eman\times[0,1]\to\CCman, &
\aHmt^{\gdif}(x,v,t) &= \pi\bigl( x + t \lambda(x, tv) (\gdif(x) - x)\bigr),
\end{align*}
where $x\in\CCman$, $v\in\Eman_{x}$, and $t\in[0,1]$.
Then
\[
\aHmt^{\gdif}(x,v,0) = \pi(x) = x = p(x,v).
\]
Define also the map $\bHmt^{\gdif}_0=\id_{\Eman}:\Eman\times 0 \to \Eman$ by $\aHmt^{\gdif}_0(x,v)=(x,v)$.
Then we have the following commutative diagram:
\[
\xymatrix{
\Eman\times 0 \ar@{^(->}[d] \ar[rr]^-{\bHmt^{\gdif}_0 = \id_{\Eman}} && E \ar[d]^-{p} \\
\Eman\times [0,1] \ar@{-->}[rru]^-{\bHmt^{\gdif}} \ar[rr]^-{\aHmt^{\gdif}} && \CCman
}
\]
We want to construct a diagonal map $\bHmt^{\gdif}:\Eman\times [0,1]\to\Eman$ extending $\bHmt^{\gdif}_0$ and satisfying $\aHmt^{\gdif} = p \circ \bHmt^{\gdif}$.

By assumption for each $x\in\CCman$ the path $\pi(I_{x,\gdif(x)})$ is contained in some element $\Aman_i$ of the cover $\alpha$.
Since $\conn$ is complete with respect to the pair $(\bSet,\alpha)$, we get from Lemma~\ref{lm:local_homotopy_ext} that there exists a unique smooth lift $\bHmt^{\gdif}:\bSet\times[0,1]\to\Eman$ such that $\aHmt^{\gdif} = p\circ \bHmt^{\gdif}$ and for every $(x,v)\in\bSet$ the path $t\mapsto\bHmt^{\gdif}(x,v,t)$ is horizontal.
Extend $\bHmt^{\gdif}$ to the map $\bHmt^{\gdif}:\Eman\times[0,1]\to\Eman$ by $\bHmt^{\gdif}(z,t) = z$ for $z\in\Eman\setminus\bSet$.

We claim that $\bHmt^{\gdif}$ is smooth.
It suffices to show that $\bHmt^{\gdif}(z,t) = z$ for $z\in\bSet \cap \Nman$.

Let $(x,v) \in \bSet\cap \Nman$.
Then $\lambda(x,v) = 0$, whence
\[
\aHmt^{\gdif}(x,v,t) = \pi\bigl( x + \lambda(x,v) (\gdif(x) - x)\bigr) = \pi (x) = x
\]
for all $t\in[0,1]$.
In other words, $t\mapsto \aHmt^{\gdif}(x,v,t)$ is a constant path, whence its lifting $t\mapsto \bHmt^{\gdif}(x,v,t)$ is also constant, that is $\bHmt^{\gdif}(x,v,t) = \bHmt^{\gdif}(x,v,0) = (x,v)$.

Thus each $\bHmt^{\gdif}_t$ is supported in $\bSet$, so in particular, we have a well-defined map
\begin{align*}
&\msect: \WW\times[0,1] \to \CfU{\aSet}{\Eman}{\CCman}, &
\msect(\gdif, t)(x,v) &= \bHmt^{\gdif}(x,v,t).
\end{align*}
One easily checks that $\msect$ is continuous.
Let us verify properties~\ref{enum:lm:sect_at_id:F0_pt}-\ref{enum:lm:sect_at_id:rho_xi__idU} for $\msect$.
Evidently, \ref{enum:lm:sect_at_id:F0_pt} and \ref{enum:lm:sect_at_id:loc_sect_adopted} follow from the definition.

\smallskip

\ref{enum:lm:sect_at_id:xi_q__h}
We should prove that $\msect_t(\id_{\CCman}) = \id_{\Eman}$ for all $t\in[0,1]$.
By the construction $\msect_t(\gdif)$ is fixed on $\Eman\setminus\bSet$ for all $\gdif\in\WW$ and $t\in[0,1]$.
Therefore it suffices to prove that $\msect_t(\id_{\CCman})(x,v) = (x,v)$ for $(x,v)\in\bSet$.
Notice that
\[\aHmt^{\id_{\CCman}}(x,v,t) = \pi\bigl(x + t\lambda(x, tv) (x - x)\bigr) = \pi(x) = x\]
for all $t\in[0,1]$.
In other words, the path $t\mapsto \aHmt^{\id_{\CCman}}(x,v,t)$ is constant, and therefore so is its horizontal lift $t \mapsto \bHmt^{\id_{\CCman}}(x,v,t)$.
Hence
\[
\msect(\id_{\CCman})(x,v) = \bHmt^{\id_{\CCman}}(x,v,1) = \bHmt^{\id_{\CCman}}(x,v,0) = (x,v).
\]

\smallskip

\ref{enum:lm:sect_at_id:rho_xi__idU}
Recall that we identified $\CCman$ with the zero section of $p$.
Let $(x,0)\in\CCman \subset \Eman$, $\gdif\in\WW$, and $\gamma_x:[0,1]\to\CCman$ be given by $\gamma_x(t) = \aHmt^{\gdif}(x,0,t)$.
By assumption $\lambda(x,0)=1$ for all $x\in\CCman$, whence
\begin{align*}
\gamma_x(0) &= \bHmt^{\gdif}(x,0,0) = \pi(x) = x, \\
\gamma_x(1) &= \bHmt^{\gdif}(x,0,1) = \pi\bigl( x + \lambda(x, 0) (\gdif(x) - x)\bigr) = \pi(\gdif(x))=\gdif(x).
\end{align*}
The latter equality holds since $\gdif(x)\in\CCman$.
As $T\CCman\subset \HU$, it follows that the horizontal lift of $\gamma_x$ coincides with $\gamma_x$ itself.
In other words,
\[
\bHmt^{\gdif}(x,0,t) = \bigl( \aHmt^{\gdif}(x,0,t), 0 \bigr) \equiv  \aHmt^{\gdif}(x,0,t).
\]
In particular,
\[
\rho\circ\msect(\gdif)(x) =
\msect(\gdif)|_{\CCman}(x) = \msect(\gdif)(x,0) = \bHmt^{\gdif}(x,0,1)=
 \aHmt^{\gdif}(x,0,1) = \bigl(\gdif(x), 0\bigr) \equiv \gdif(x),
\]
which means that $\rho\circ\msect(\gdif) = \gdif$.

\medskip

Now let us show how to choose a neighborhood $\VV$.
Due to~\ref{enum:lm:sect_at_id:xi_q__h},
\[\msect(\id_{\CCman} \times [0,1]) = \id_{\Eman} \in \DfU{\aSet}{\Eman}{\CCman}.\]
Since the group $\DfU{\aSet}{\Eman}{\CCman}$ is open in $\CfU{\aSet}{\Eman}{\CCman}$, it follows that the set $\mathcal{Q} := \msect^{-1}(\DfU{\aSet}{\Eman}{\CCman})$ will be an open neighborhood of $\id_{\CCman} \times[0,1]$ in $\WW \times[0,1]$.
Moreover, as $[0,1]$ is compact, there exists an open neighborhood $\VV$ of $\id_{\CCman}$ in $\WW$ such that $\VV\times[0,1]\subset\mathcal{Q}$.
Then the restriction $\msect|_{\VV\times[0,1]}: \VV\times[0,1] \to \DfU{\aSet}{\Eman}{\CCman}$ will be a required homotopy.
\end{proof}

Now we can prove Theorem~\ref{th:loc_triv_fibr}.
Let $\conn$ be any Ehresmann connection on $\Eman$ such that $T\CCman\subset \HBundle{\conn}$.
The latter inclusion holds for instance for any linear connection.
Then by Lemma~\ref{lm:sect_at_id} the map $\rho$ defined by~\eqref{equ:restr_A_DfM} admits a local section on some neighborhood of $\id_{\CCman}$, whence by Lemma~\ref{lm:sect_hom} $\rho$ is a locally trivial fibration.
\end{proof}

\section{Fiberwise Morse lemma}\label{sect:fiber_wise_Morse}
The following lemma shows that the implication \ref{enum:MBF:Hess_non_deg}$\Rightarrow$\ref{enum:MBF:normal_form} in Definition~\ref{def:MB_map} can be regarded as a ``fiberwise'' analogue of Morse lemma.

\begin{lemma}\label{lm:fiberwise_morse_lemma}
Let $\Uman \subset\bR^{k}$ and $\Vman \subset \bR^{l}$ be open neighborhoods of the corresponding origins and $\func:\Uman\times\Vman\to\bR$ be a smooth function having the following properties:
\begin{enumerate}[label={\rm(\arabic*)}]
\item
$\func^{-1}(0)=\Uman\times 0$ and this set coincides with the set of critical points of $\func$;

\item
at each critical point $(u,0)\in\Uman\times 0$ the Hessian of $\func$ in the $\Vman$-direction
\[
H(f,u) =
\left(
\frac{\partial^2\func}{\partial v_i \partial v_j}(u,0)
\right)_{i,j=1,\ldots,l}
\]
is non-degenerate, where $v=(v_1,\ldots,v_l)$ are coordinates in $\bR^{l}$.
\end{enumerate}

Then there exist smaller neighborhoods $\hUman \subset \Uman$ and $\hVman\subset\Vman$ of the corresponding origins and a smooth embedding $\dif:\hUman\times\hVman \to \Uman\times\Vman$ such that
\begin{enumerate}[label={\rm(\alph*)}]
\item\label{enum:lm:fbm:fiberwise}
$\dif(u,v) = \bigl(u, \gdif(u,v)\bigr)$, $(u,v)\in\hUman\times\hVman$, for some smooth map $\gdif:\hUman\times\hVman \to \Vman$, i.e. \myemph{$\dif$ preserves the first coordinate $u$};
\item\label{enum:lm:fbm:normal_form}
$\func\circ \dif(u,v_1,\ldots,v_l) = - v_1^2 - \cdots - v_{\lambda}^2 + v_{\lambda+1}^2 + \cdots + v_{l}^2$ for some $\lambda$.
\end{enumerate}
\end{lemma}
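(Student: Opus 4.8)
The goal is a fiberwise version of the Morse lemma: the classical Morse lemma is applied smoothly in families, with the parameter $u\in\hUman$ carried along untouched. The plan is to mimic the standard Morse lemma proof (via the Morse–Palais / Moser-type integration of a smoothly varying quadratic form) while checking at every stage that all constructions depend smoothly on $u$ and that the resulting change of coordinates fixes $u$.

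\medskip

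First I would reduce to a normal form for the Hessian. For each fixed $u\in\Uman$, property~(2) gives a nondegenerate symmetric $l\times l$ matrix $H(\func,u)$ depending smoothly on $u$. Since $\func^{-1}(0)=\Uman\times 0$ consists of critical points, for fixed $u$ the function $v\mapsto\func(u,v)$ has a nondegenerate critical point at the origin with critical value $0$ and with vanishing first derivatives there; by Hadamard's lemma applied twice in the $\Vman$-variables (integrating along rays $v\mapsto \func(u,tv)$) I obtain a smooth symmetric-matrix-valued function $A(u,v)=(a_{ij}(u,v))$ with
\[
\func(u,v)=\sum_{i,j=1}^{l} a_{ij}(u,v)\,v_i v_j,
\qquad
a_{ij}(u,0)=\tfrac12\,\frac{\partial^2\func}{\partial v_i\partial v_j}(u,0),
\]
so that $A(u,0)=\tfrac12 H(\func,u)$ is nondegenerate. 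The point is that the remainder integrals in Hadamard's lemma are smooth in all variables simultaneously, so $A$ depends smoothly on $(u,v)$; no information about the $u$-direction is needed beyond smoothness because there are no transverse critical directions to straighten.

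\medskip

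Next comes the diagonalization step, carried out fiberwise but smoothly in $(u,v)$. Starting from $\func(u,v)=\sum a_{ij}(u,v)v_iv_j$ with $A(u,0)$ nondegenerate, I would run the inductive Gram–Schmidt-style completion of squares exactly as in Milnor's proof of the Morse lemma, at each step choosing a smooth change of the $\Vman$-coordinates of the form $v\mapsto \gdif(u,v)$ that fixes $u$. The inductive hypothesis carries a smoothly $(u,v)$-dependent coefficient matrix that is nonsingular near $v=0$, and each elementary step requires taking a square root of a coefficient that is nonzero at $v=0$ (possibly after a smooth $u$-dependent permutation of coordinates to make a diagonal entry nonvanishing); this is why I must shrink to smaller neighborhoods $\hUman\subset\Uman$ and $\hVman\subset\Vman$ on which all the relevant coefficients stay bounded away from $0$ and all square roots are defined and smooth. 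The output is a smooth embedding $\dif(u,v)=(u,\gdif(u,v))$ with $\func\circ\dif(u,v)=\sum_{i}\epsilon_i v_i^2$, $\epsilon_i\in\{\pm1\}$; reordering the coordinates by a fixed (constant in $u$) permutation groups the negative squares first, giving~\ref{enum:lm:fbm:normal_form}. That $\lambda$ is independent of $u$ follows since $\lambda$ equals the negative index of $H(\func,u)$, which is locally constant on the connected $\hUman$.

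\medskip

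The main obstacle is keeping everything \emph{simultaneously} smooth in $(u,v)$ while performing the inductive completion of squares: the naive choice of diagonalizing coordinates can fail to be smooth where a pivot entry vanishes, and one must argue that the pivots can be chosen nonzero on a full neighborhood and that square roots and inverses introduced at each stage remain smooth there. This is exactly the place where the classical single-variable Morse lemma is delicate, and adding the parameter $u$ only requires verifying that the same shrinking of neighborhoods works uniformly; since there are finitely many inductive steps and $\hUman,\hVman$ can be shrunk at each step, this presents no essential new difficulty beyond bookkeeping. I would not belabor the routine estimates, but would emphasize that property~\ref{enum:lm:fbm:fiberwise}, $\dif(u,v)=(u,\gdif(u,v))$, is preserved automatically because every coordinate change acts only on the $v$-variables.
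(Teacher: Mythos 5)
Your proposal is correct and takes essentially the same route as the paper, whose entire proof consists of the observation that in the standard proof of the Morse lemma (it cites Golubitsky--Guillemin, Chapter~2, \S6 --- the same Hadamard-lemma-plus-completion-of-squares argument you spell out) every change of coordinates acts only on the $v$-variables and hence preserves $u$, the details being left to the reader. One cosmetic point: there is no ``smooth $u$-dependent permutation'' (permutations are discrete, hence constant on a connected $\hUman$); instead one fixes a single linear change of the $v$-coordinates making the pivot nonzero at $(0,0)$ and shrinks $\hUman\times\hVman$ so it stays nonzero --- which is exactly the shrinking your argument already allows.
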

\begin{proof}
The principal observation is that in the arguments of the standard proof of Morse lemma all changes of coordinates preserve coordinate $u$, see e.g.~\cite[Chapter~2, \S6]{GolubitskiGuillemin:StableMats:ENG:1973}.
This will guarantee condition~\ref{enum:lm:fbm:fiberwise}.
We leave the details for the reader and also refer to~\cite{BanyagaHurtubise:EM:2004} for several proofs of Morse-Bott lemma.
\end{proof}

\begin{lemma}\label{lm:bott_func_has_conn}
Let $p:\bR^{k+l} \equiv \bR^{k}\times \bR^{l}\to \bR^{k}$, $p(u,v) = u$, be a trivial vector bundle, $\lambda\in\{0,\ldots,l\}$, and $\func:\bR^{k+l} \to \bR$ be a Morse-Bott function defined by
\[
 \func(u,v_1,\ldots,v_l) = - v_1^2 - \cdots - v_{\lambda}^2 + v_{\lambda+1}^2 + \cdots + v_{l}^2,
\]
for $u\in\bR^{k}$ and $v = (v_1,\ldots,v_l) \in\bR^{l}$.
Then there exists an Ehresmann connection \[ \conn:T\bR^{k+l} \to \VBundle{\bR^{k+l}} \] whose horizontal bundle $\HU$ is contained in the distribution $\ker(d\func)$ of the kernels of the differential $d\func$ of $\func$.
\end{lemma}
\begin{proof}
Identify the tangent bundle $T\bR^{k+l}$ of $\bR^{k+l}$ with $\bR^{k}\times \bR^{l} \times \bR^{k}\times \bR^{l}$, so that the projection $q:T\bR^{k+l} \to \bR^{k+l}$ is given by $q(u,v,a,b) = (u,v)$, $u,a\in\bR^{k}$ and $v,b\in\bR^{l}$.
Then the corresponding vertical subbundle $\VBundle{\bR^{k+l}}$ is $\bR^{k+l} \times 0 \times \bR^{l}$, while the differential $d\func:T\bR^{k+l} \to \bR$ of $\func$ is given by the formula:
\[
    d\func(u,v, a,b) = - 2v_1 b_1 - \cdots - 2v_{\lambda} b_{\lambda} + 2v_{\lambda+1} b_{\lambda+1} + \cdots + 2v_{l} b_l,
\]
for $v=(v_1,\ldots,v_l)$ and $b=(b_1,\ldots,b_l) \in\bR^{l}$.
Define the following Ehresmann connection $\conn:T\bR^{k+l} \to \VBundle{\bR^{k+l}}$ by $\conn(u,v,a,b) = (u,v,0,b)$.
Then $\HU = \bR^{k+l} \times \bR^{k}\times 0$ and for each $(u,v,a,0)\in \HU$ we have that $d\func(u,v,a,0) = 0$, whence $\HU \subset \ker(d\func)$.
\end{proof}

\section{Leaf-preserving diffeomorphisms of singular foliations}\label{sect:diff_leaf_pres_fol}
\begin{definition}\label{def:Z:fol}
Let $\Mman$ be a smooth manifold, $\Fol$ be a partition of $\Mman$, and $\{\Cman_i\}_{i\in\Lambda}$ be a \myemph{discrete} family of elements of $\Fol$.
We say that $\Fol$ is a \myemph{foliation of class $\Eclass$} with singularities $\{\Cman_i\}_{i\in\Lambda}$ whenever
\begin{enumerate}[leftmargin=*, label={\rm(\alph*)}]
\item the restriction of $\Fol$ to $\Mman\setminus\mathop{\cup}\limits_{i\in\Lambda}\Cman_i$ is a smooth codimension one foliation, and every connected component of $\partial\Mman$ is a leaf of $\Fol$;

\item\label{enum:Z:sing_leaf_props}
every $\Cman_i$ is a smooth connected boundaryless submanifold of $\Mman$ and $\dim\Cman_i<\dim\Mman$;

\item\label{enum:Z:Ehresmann_conn}
for each $i\in\Lambda$ and there exists an open tubular neighborhood $\Nman_i$ of $\Cman_i$ in $\Mman$ and a retraction $p:\Nman_i\to\Cman_i$ having a structure of a vector bundle such that for each $x\in\Cman_i$ one can find a local Ehresmann connection $\conn_{x}$ over some open neighborhood $\aSet_x$ of $x$ in $\Nman_i$ with $\HBundle{\conn_{x}} \subset T\Fol$.
\end{enumerate}
\end{definition}

In that case each $\Cman_i$ will be called a \myemph{singluar} leaf of $\Fol$.
Lemmas~\ref{lm:fiberwise_morse_lemma} and~\ref{lm:bott_func_has_conn} imply that every Morse-Bott foliation, see Definition~\ref{def:MB_fol}, belongs to class $\Eclass$.
Therefore Theorem~\ref{th:MBfol:ltf} is a particular case of the following Theorem~\ref{th:Zfol:ltf}:

\begin{theorem}\label{th:Zfol:ltf}
Let $\Fol$ be a foliation of class $\Eclass$ on a manifold $\Mman$, e.g. a Morse-Bott foliation, $\Cman_1,\ldots,\Cman_k$ be any finite collection of its \myemph{compact} singular leaves, and $\Sigma = \mathop{\cup}\limits_{i=1}^{k}\Cman_i$, so $\Sigma$ is a submanifold of $\Mman$ whose connected components may have distinct dimensions.
Let $\Diff(\Sigma)$ be the group of diffeomorphisms of $\Sigma$.
Then for every closed subset $\Xman\subset\Mman \setminus \Sigma$ the homomorphisms
\begin{align}
\label{equ:restr_DFol}
&\rho: \Diff(\Fol,\Xman) \to \Diff(\Sigma), &
\rho(\dif)&= \dif|_{\Sigma}, \\
\label{equ:restr_DIdFol}
&\rho_0: \DiffId(\Fol,\Xman) \to \DiffId(\Sigma) \equiv \prod_{i=1}^{k} \DiffId(\Cman_i), &
\rho_0(\dif)&= (\dif|_{\Cman_1}, \ldots, \dif|_{\Cman_k}),
\end{align}
are locally trivial fibrations over their images and the map $\rho_0$ is surjective.
\end{theorem}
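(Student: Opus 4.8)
The plan is to reduce Theorem~\ref{th:Zfol:ltf} to the already-proven Theorem~\ref{th:loc_triv_fibr} by working one singular leaf at a time and appealing to the group-theoretic Lemma~\ref{lm:sect_hom}. Since $\rho$ and $\rho_0$ are continuous homomorphisms of topological groups, by Lemma~\ref{lm:sect_hom} it suffices to produce a \emph{local section near the identity}: an open neighborhood $\VV$ of $\id_\Sigma$ in $\Diff(\Sigma)$ together with a continuous map $\msect:\VV\to\Diff(\Fol,\Xman)$ satisfying $\msect(\id_\Sigma)=\id_\Mman$ and $\rho\circ\msect=\id_\VV$. The surjectivity of $\rho_0$ and the statement that the image consists of entire path components will then follow from the last two paragraphs of Lemma~\ref{lm:sect_hom}, once we check that the constructed section sends $\id$-path-component into $\id$-path-component.

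\textbf{Construction of the local section.}
The components $\Cman_1,\dots,\Cman_k$ of $\Sigma$ are mutually disjoint closed submanifolds, so we may treat them independently and then combine: the space $\Diff(\Sigma)$ decomposes as $\prod_i \Diff(\Cman_i)$ up to the permutations of identically-embedded components, and a neighborhood of $\id_\Sigma$ meets only the identity-preserving factor. Fix one component $\Cman=\Cman_i$. By Definition~\ref{def:Z:fol}\ref{enum:Z:reg_nbh} there is a vector bundle $p:\Eman\to\Cman$, a regular neighborhood $\Nman$ of $\Cman$ in $\Mman$, and an open embedding $\phi:\Nman\to\Eman$ carrying $\Fol|_\Nman$ to the induced foliation $\indFol{\phi}$. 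By Definition~\ref{def:Z:fol}\ref{enum:Z:Ehresmann_conn} together with the convexity Lemma~\ref{lm:Ehresmann_conn_convex}, the local connections $\conn_x$ glue (via a partition of unity over the compact $\Cman$) into a single local Ehresmann connection $\conn$ over an open neighborhood $\aSet$ of $\Cman$ whose horizontal bundle $\HU$ lies in the leaf-tangent distribution $T\indFol{\phi}$, and we may arrange $T\Cman\subset\HU$ since $\Cman$ is itself a leaf. This is precisely the hypothesis of Lemma~\ref{lm:sect_at_id}. That lemma supplies a neighborhood $\VV_i$ of $\id_{\Cman_i}$ and a homotopy $\msect^i:\VV_i\times[0,1]\to\DfU{\aSet}{\Eman}{\Cman_i}$ whose time-one map $\msect^i_1$ is a local section of the restriction to $\Cman_i$ and which is supported in $\aSet$, hence (via $\phi^{-1}$) supported in $\Nman$, shrunk so as to avoid the closed set $\Xman$.

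\textbf{Leaf-preservation and assembly.}
The crucial point—and the reason this theorem is more than Theorem~\ref{th:loc_triv_fibr}—is that the diffeomorphisms $\msect^i_1(\gdif)$ must be \emph{leaf-preserving}. This is exactly where the horizontal-bundle condition $\HU\subset T\indFol{\phi}$ is used: by property~\ref{enum:lm:sect_at_id:loc_sect_adopted} of Lemma~\ref{lm:sect_at_id}, each point $(x,v)$ is moved along a $\conn$-horizontal path, which therefore stays inside the leaf of $\indFol{\phi}$ through $(x,v)$; pulling back by $\phi$, the point stays inside the leaf of $\Fol$, so $\msect^i_1(\gdif)\in\Diff(\Fol,\Xman)$. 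Since the supports lie in the disjoint neighborhoods $\Nman_i$ (after shrinking to make the $\overline{\Nman_i}$ disjoint) and each $\msect^i$ is identity outside $\Nman_i$, we may set $\VV=\prod_i\VV_i$ and define $\msect(\beta)$ to be the composition (equivalently, the product, as the supports are disjoint) of the $\msect^i_1(\beta|_{\Cman_i})$, extended by $\id_\Mman$ off $\cup_i\Nman_i$. Then $\msect$ is continuous, $\msect(\id_\Sigma)=\id_\Mman$, and $\rho\circ\msect=\id_\VV$ by property~\ref{enum:lm:sect_at_id:rho_xi__idU}. Applying Lemma~\ref{lm:sect_hom} gives that $\rho$ is a locally trivial fibration over its image, and since $\msect$ is built from homotopies starting at $\id_\Mman$ (property~\ref{enum:lm:sect_at_id:F0_pt}), it carries $\VV\cap\DiffId(\Sigma)$ into the identity component $\DiffId(\Fol,\Xman)$, which yields the fibration statement for $\rho_0$ and its surjectivity onto $\prod_i\DiffId(\Cman_i)$.

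\textbf{Expected main obstacle.}
The technical heart is verifying that horizontal paths remain within single leaves: one must be careful that the image $\aSet$ on which the glued connection $\conn$ lives, and on which Lemma~\ref{lm:sect_at_id} operates, genuinely has horizontal bundle inside $T\indFol{\phi}$ after the partition-of-unity gluing, rather than merely each local piece doing so. Lemma~\ref{lm:Ehresmann_conn_convex} in its distribution form (with $\mathcal{L}=T\indFol{\phi}$) is exactly what guarantees this, so the obstacle is really bookkeeping: tracking the nested neighborhood shrinkings ($\aSet\subset\phi(\Nman)$, support avoiding $\Xman$, disjointness of the $\Nman_i$) and confirming that the completeness-with-respect-to-$(\bSet,\alpha)$ hypothesis needed by Lemma~\ref{lm:sect_at_id} survives the transfer through $\phi$. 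None of these steps should present a genuine difficulty, but they must be stated with enough care that the leaf-preservation conclusion is unambiguous.
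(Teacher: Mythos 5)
Your proposal is correct and follows essentially the same route as the paper's own proof: a local section near $\id_\Sigma$ via Lemma~\ref{lm:sect_hom}, built component-by-component in disjoint regular neighborhoods by gluing the local connections (Lemma~\ref{lm:Ehresmann_conn_convex}) and applying Lemma~\ref{lm:sect_at_id}, with leaf-preservation coming from horizontality of the tracks and membership in $\DiffId(\Fol,\Xman)$ from the homotopy starting at the identity. The only cosmetic difference is that $T\Cman_i\subset\HU$ need not be ``arranged'': it is automatic, since the horizontal spaces lie in $T\indFol{\phi_i}$, which at points of the leaf $\Cman_i$ equals $T\Cman_i$ by a dimension count, and your worry about transferring completeness through $\phi_i$ is moot because Lemma~\ref{lm:sect_at_id} establishes the needed completeness internally via Lemma~\ref{lm:existence_complete_part_conn}.
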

\begin{proof}
Notice that $\DiffId(\Sigma)$ is open in $\Diff(\Sigma)$.
Therefore it suffices to find an open neighborhood $\VV$ of $\id_{\Sigma}$ in $\DiffId(\Sigma)$ and a continuous map $\msect:\VV\to\DiffId(\Fol,\Xman)$ such that
\begin{align}\label{equ:need_to_prove}
\msect(\id_{\Sigma}) &= \id_{\Mman}, &
\rho_0\circ \msect &= \id_{\VV}.
\end{align}
Then by Lemma~\ref{lm:sect_hom} the maps~\eqref{equ:restr_DFol} and~\eqref{equ:restr_DIdFol} will be  locally trivial fibrations over their images.

For $i=1,\ldots,k$ let $\Nman_i$ be an open tubular neighborhood of $\Cman_i$ and $p_i:\Nman_i\to\Cman_i$ be a smooth vector bundle retraction.
One can assume that the closures $\{\overline{\Nman_i}\}_{i=1,\ldots,k}$ are compact and mutually disjoint.

Then for every open neighborhood $\aSet_i$ of $\Cman_i$ with $\overline{\aSet_i}\subset \Nman_i$ every $\dif\in \DfU{\aSet_i}{\Nman_i}{\Cman_i}$ is fixed outside $\aSet_i$, and so it extends by the identity to all of $\Mman$.
In other words, we have a natural inclusion:
\[
    j: \DfU{\aSet_i}{\Nman_i}{\Cman_i} \ \subset \ \Diff(\Mman, \Mman_i\setminus\Nman)
\]
which is continuous with respect to strong Whitney topologies since all $\dif\in\DfU{\aSet_i}{\Nman_i}{\Cman_i}$ are supported in the compact set $\overline{\Nman_i}$.

\begin{sublemma}\label{lm:sect_prop}
For each $i=1,\ldots,k$ there exist an open neighborhood $\VV_i$ of $\id_{\Cman_i}$ in $\DiffId(\Cman_i)$ and a continuous map $\msect_i:\VV_i\to \DiffId(\Fol,\Mman\setminus\Nman_i) \subset\DiffId(\Fol,\Xman)$ such that
\begin{enumerate}[label={\rm(\roman*)}]
\item\label{enum:sect_prop:id} $\msect_i(\id_{\Cman_i}) = \id_{\Mman}$;
\item\label{enum:sect_prop:Ci_inv} $\msect_i(\gdif)(\Cman_i) = \Cman_i$ and $\msect_i(\gdif)|_{\Cman_i} = \gdif$ for all $\gdif\in\VV_i$.
\end{enumerate}
\end{sublemma}
\begin{proof}
Using partition of unity technique one can glue local Ehresmann connections $\{\conn_x\}_{x\in\Cman_i}$ from Definition~\ref{def:Z:fol}\ref{enum:Z:Ehresmann_conn} and get a local Ehresmann connection $\conn_i$ over some open neighborhood $\aSet_i \subset \Nman_i$ of $\Cman_i$ in $\Nman_i$.
Moreover, it follows from Lemma~\ref{lm:Ehresmann_conn_convex} that $\HBundle{\conn_i} \subset T\Fol$ as well.

Then by Lemma~\ref{lm:sect_at_id} there exist an open path-connected neighborhood $\VV_i$ of $\id_{\Cman_i}$ and a homotopy
\[
    \rsect_i:\VV_i\times[0,1]\to\DfU{\aSet_i}{\Nman_i}{\Cman_i} \subset \Diff(\Mman, \Mman_i\setminus\Nman)
\]
having properties~\ref{enum:lm:sect_at_id:F0_pt}-\ref{enum:lm:sect_at_id:rho_xi__idU} of that lemma.

In particular, by properties~\ref{enum:lm:sect_at_id:F0_pt} and \ref{enum:lm:sect_at_id:loc_sect_adopted} for every $\gdif\in\VV_i$, $t\in[0,1]$, and $(x,v)\in\aSet_i$ the path $\gamma_{(x,v)}:t\mapsto\rsect_i(\gdif,t)(x,v)$ starts at $(x,v)$ and is horizontal, i.e.\! its tangent vectors are contained in the horizontal bundle $\HBundle{\conn_i}$ of the connection $\conn_i$ and therefore in the distribution $T\Fol$.
Hence every path $\gamma_{(x,v)}$ is contained in some leaf of the foliation $\Fol$.

Then $\rsect_i(\gdif,t)$ preserves leaves of $\Fol$, so $\rsect_i(\VV_i\times[0,1])\subset\Diff(\Fol,\Xman)$.
Moreover, as $\VV_i$ is path-connected, we get that $\rsect_i(\VV_i\times[0,1])\subset\DiffId(\Fol,\Xman)$.
One easily checks that the mapping $\msect_i = (\rsect_i)_1:\VV \equiv \VV\times 1 \to\DiffId(\Fol,\Xman)$ satisfies conditions~\ref{enum:sect_prop:id} and~\ref{enum:sect_prop:Ci_inv}.
\end{proof}

Define the following two maps
\begin{align*}
&\bar{\msect}: \VV_1\times\cdots\times \VV_k\to \DiffId(\Fol,\Xman), &
&\bar{\msect}(\gdif_1,\ldots,\gdif_k) = \msect_1(\gdif_1)\circ\cdots\circ\msect_k(\gdif_k), \\
&\psi:\VV_1\times\cdots\times \VV_k \to \DiffId(\Sigma), &
&\psi(\gdif_1,\ldots,\gdif_k)(x) =
\begin{cases}
\gdif_1(x), & x \in \Cman_1, \\
\cdots \\
\gdif_k(x), & x \in \Cman_k,
\end{cases}
\end{align*}
for $\gdif_i\in\VV_i$.
Then it is easy to verify that
\begin{itemize}
\item $\psi$ is an open embedding,
\item its image $\VV:=\psi(\VV_1\times\cdots\times\VV_k)$ is a neighborhood of $\id_{\Sigma}$ in $\DiffId(\Sigma)$,
\item $\bar{\msect}(\id_{\Cman_1},\ldots,\id_{\Cman_k})=\id_{\Mman}$,
\item $\rho\circ\bar{\msect} = \psi$.
\end{itemize}
Hence the map $\msect = \bar{\msect}\circ\psi^{-1}: \VV\to\DiffId(\Fol,\Xman)$ satisfies~\eqref{equ:need_to_prove}.
Theorem~\ref{th:Zfol:ltf} is completed.
\end{proof}

\section{Stabilizers of smooth maps $\Mman\to\Pman$}\label{sect:stabs_smooth_maps_MP}
Let $\Mman$ be a smooth compact manifold, $\Pman$ be either the real line or the circle $S^1$, $\func:\Mman\to\Pman$ be a smooth map, and $\fSing$ be the set of critical points of $\func$.
Then one can define a partition $\Fol_{\func}$ of $\Mman$ whose elements are path components of $\fSing$ and of sets $\func^{-1}(c)\setminus\fSing$, $c\in\Pman$.

\begin{lemma}
For every $\func\in \Ci{\Mman}{\Pman}$ and every subset $\Xman\subset\Mman$ we have that
\begin{align*}
&\Diff(\Fol_{\func}) \subset \Stabilizer{\func}, &
&\Diff(\Fol_{\func},\Xman) \subset \Stabilizer{\func,\Xman}, &
&\DiffId(\Fol_{\func},\Xman) \subset \StabilizerId{\func,\Xman}.
\end{align*}
\end{lemma}
\begin{proof}
By definition $\func$ takes constant values at elements of $\Fol_{\func}$, whence $\Diff(\Fol_{\func}) \subset \Stabilizer{\func}$.
Therefore
$\Diff(\Fol_{\func},\Xman) \, \equiv \,
\Diff(\Fol_{\func}) \cap \Diff(\Mman,\Xman) \, \subset \,
\Stabilizer{\func} \cap \Diff(\Mman,\Xman) \, \equiv \, \Stabilizer{\func,\Xman}$.
In particular, $\DiffId(\Fol_{\func},\Xman) \subset \StabilizerId{\func,\Xman}$.
\end{proof}

Notice that Lemmas~\ref{lm:fiberwise_morse_lemma} and~\ref{lm:bott_func_has_conn} imply that for every Morse-Bott map $\func:\Mman\to\Pman$ its foliation $\Fol_{\func}$ belongs to class $\Eclass$.
Therefore Theorem~\ref{th:MB_func:ltf} is a particular case of the following Theorem~\ref{th:func_ltf}.

\begin{theorem}\label{th:func_ltf}
Let $\Mman$ be a smooth manifold and $\func:\Mman\to\Pman$ be a smooth map whose foliation $\Fol_{\func}$ belongs to class $\Eclass$ with singularities $\{\Cman_i\}_{i\in\Lambda}$ and $\fSing = \mathop{\cup}\limits_{i\in\Lambda} \Cman_i$.
Let also and $\Cman_1,\ldots,\Cman_k$ be any finite collection of \myemph{compact} critical submanifolds of $\func$, and $\Xman\subset\Mman \setminus \Sigma$ be a closed (possibly empty) subset.
Then the maps
\begin{align*}
&\rho: \Stabilizer{\func,\Xman} \to \Diff(\Sigma), & \rho(\dif) &= \dif|_{\Sigma}, \\
&\rho_0: \StabilizerId{\func,\Xman} \to \DiffId(\Sigma) \equiv \prod_{i=1}^{k} \DiffId(\Cman_i), & \rho_0(\dif)&= (\dif|_{\Cman_1}, \ldots, \dif|_{\Cman_k}),
\end{align*}
are locally trivial fibrations over their images, and the map $\rho_0$ is surective.
\end{theorem}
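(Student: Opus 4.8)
The plan is to deduce Theorem~\ref{th:func_ltf} directly from Theorem~\ref{th:Zfol:ltf} applied to the foliation $\Fol_{\func}$, exploiting the inclusion of the leaf-preserving group into the stabilizer of $\func$. All of the analytic work---the construction of an Ehresmann connection adapted to $\func$ and of the horizontal-lift section near the identity---has already been done for the smaller group $\Diff(\Fol_{\func},\Xman)$, so the only new input required is the observation that the section produced there already takes values in $\Stabilizer{\func,\Xman}$.

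First I would dispose of the (routine) well-definedness of the two maps. Because $\func\circ\dif=\func$ and $d\dif$ is everywhere an isomorphism, any $\dif\in\Stabilizer{\func,\Xman}$ carries critical points to critical points; hence $\dif(\Sigma)=\Sigma$, the restriction $\rho(\dif)=\dif|_{\Sigma}$ is a well-defined element of $\Diff(\Sigma)$, and $\rho$ is a continuous homomorphism of topological groups. For $\dif\in\StabilizerId{\func,\Xman}$ a path joining $\dif$ to $\id_{\Mman}$ restricts on each $\Cman_i$ to a path from $\id_{\Cman_i}$, so each $\Cman_i$ is preserved and $\rho_0$ indeed lands in $\prod_{i=1}^{k}\DiffId(\Cman_i)=\DiffId(\Sigma)$.

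The heart of the proof is then a single sentence. By hypothesis $\Fol_{\func}$ is of class $\Eclass$, so Theorem~\ref{th:Zfol:ltf} supplies an open neighborhood $\VV$ of $\id_{\Sigma}$ in $\DiffId(\Sigma)$ and a continuous local section $\msect:\VV\to\DiffId(\Fol_{\func},\Xman)$ with $\msect(\id_{\Sigma})=\id_{\Mman}$ and $\rho_0\circ\msect=\id_{\VV}$. The preceding Lemma gives the inclusions
\[ \DiffId(\Fol_{\func},\Xman)\ \subset\ \StabilizerId{\func,\Xman}\ \subset\ \Stabilizer{\func,\Xman}, \]
so after composing with them $\msect$ becomes, at one and the same time, a local section near $\id_{\Sigma}$ of $\rho$ and of $\rho_0$, with $\msect(\VV)\subset\StabilizerId{\func,\Xman}$. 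Since $\DiffId(\Sigma)$ is open in $\Diff(\Sigma)$, the neighborhood $\VV$ is also a neighborhood of $\id_{\Sigma}$ in $\Diff(\Sigma)$, so I would then invoke Lemma~\ref{lm:sect_hom}: its first part shows that $\rho$ is a locally trivial fibration over its image, and its final clause---applicable precisely because $\msect(\VV)\subset\StabilizerId{\func,\Xman}$---shows the same for $\rho_0$. Surjectivity of $\rho_0$ is inherited for free: Theorem~\ref{th:Zfol:ltf} already makes $\rho_0:\DiffId(\Fol_{\func},\Xman)\to\DiffId(\Sigma)$ onto, hence its restriction to the larger domain $\StabilizerId{\func,\Xman}$ is onto as well.

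No genuinely hard step remains, since the difficulty was front-loaded into Corollary~\ref{cor:bott_func_has_conn} and Lemma~\ref{lm:sect_at_id}. The one point I would take care to check is that the section built for the leaf-preserving group really lands in the $\func$-stabilizer; but this is immediate from $\Diff(\Fol_{\func},\Xman)\subset\Stabilizer{\func,\Xman}$, since a diffeomorphism preserving every path-component of every level set in particular leaves $\func$ invariant. Thus the passage from the leaf-preserving group to the $\func$-stabilizer costs nothing beyond recognizing this inclusion.
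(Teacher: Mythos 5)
Your proposal is correct and follows essentially the same route as the paper's own proof: apply Theorem~\ref{th:Zfol:ltf} to $\Fol_{\func}$ to obtain a local section $\msect:\VV\to\DiffId(\Fol_{\func},\Xman)$ near $\id_{\Sigma}$, use the inclusion $\DiffId(\Fol_{\func},\Xman)\subset\StabilizerId{\func,\Xman}\subset\Stabilizer{\func,\Xman}$ to view it as a section for $\rho$ and $\rho_0$, and conclude with Lemma~\ref{lm:sect_hom}. Your additional checks (well-definedness of $\rho$, $\rho_0$, and surjectivity of $\rho_0$ being inherited) are routine details the paper leaves implicit, and they are handled correctly.
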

\begin{proof}
By Theorem~\ref{th:Zfol:ltf} there exists an open neighborhood $\VV$ of $\id_{\Sigma}$ in $\DiffId(\Sigma)$ and a continuous map $\msect:\VV\to\DiffId(\Fol_{\func},\Xman) \subset \StabilizerId{\func,\Xman}$ such that $\msect(\id_{\Sigma}) = \id_{\Mman}$ and $\rho_0\circ \msect = \id_{\VV}$.
Then by Lemma~\ref{lm:sect_hom} the maps $\rho$ and $\rho_0$ are also locally trivial fibrations over their images.
\end{proof}

\section{Acknowledgement}
The authors are sincerely grateful to anonymous Referee for careful reading of the initial version of the manuscript and valuable remarks and suggestions allowed to clarify and improve the paper.


\def\cprime{$'$}

\end{document}